\newtheorem*{rep@theorem}{\rep@title}
\newcommand{\newreptheorem}[2]{%
\newenvironment{rep#1}[1]{%
 \def\rep@title{#2 \ref{##1}}%
 \begin{rep@theorem}}%
 {\end{rep@theorem}}}
\newtheorem{thm}{Theorem}[section]
\newtheorem{thmx}{Theorem}
\newtheorem{corx}[thmx]{Corollary}
\newtheorem{prop}[thm]{Proposition}
\newtheorem{cor}[thm]{Corollary}
\newtheorem*{thm*}{Theorem}
\newtheorem*{problem*}{Problem}
\newtheorem*{claim*}{Claim}
\theoremstyle{definition}
\newtheorem{defi}[thm]{Definition}
\newtheorem{rem}[thm]{Remark}
\newcommand{\claimmark}{\hfill$\lozenge$}
\newcommand{\Aut}{\operatorname{Aut}}
\newcommand{\End}{\operatorname{End}}
\renewcommand{\top}{\operatorname{top}}
\newcommand{\Prim}{\operatorname{Prim}}
\newcommand{\Ind}{\operatorname{Ind}}
\newcommand{\C}{\mathbb{C}}
\newcommand{\N}{\mathbb{N}}
\newcommand{\Z}{\mathbb{Z}}
\newcommand{\ra}{\rightarrow}
\renewcommand{\epsilon}{\varepsilon}
\renewcommand{\phi}{\varphi}
\renewcommand{\tilde}{\widetilde}
\renewcommand{\hat}{\widehat}
\DeclareMathOperator{\HH}{H}
\DeclareMathOperator{\GL}{GL}
\DeclareMathOperator{\SL}{SL}
\DeclareMathOperator{\Sp}{Sp}
\DeclareMathOperator{\Mp}{Mp}
\renewcommand{\det}{\operatorname{det}}
\DeclareFontFamily{U}{mathx}{\hyphenchar\font45}
\DeclareFontShape{U}{mathx}{m}{n}{
      <5> <6> <7> <8> <9> <10>
      <10.95> <12> <14.4> <17.28> <20.74> <24.88>
      mathx10
      }{}
\DeclareSymbolFont{mathx}{U}{mathx}{m}{n}
\DeclareMathAccent{\widecheck}{0}{mathx}{"71}
\DeclareMathAccent{\wideparen}{0}{mathx}{"75}
\newcommand*{\blue}[1]{{\color{blue}#1}}
\numberwithin{equation}{section}
\begin{document}
\selectlanguage{english} 


\title[]{\texorpdfstring{The orbit method for the Baum-Connes Conjecture for algebraic groups over local function fields}{The orbit method for the Baum-Connes Conjecture for algebraic groups over local function fields}}


\author{Siegfried Echterhoff$^{1}$}
\address{Mathematisches Institut der WWU M\"unster,
\newline Einsteinstrasse 62, 48149 M\"unster, Germany}
\email{echters@uni-muenster.de}

\author{Kang Li$^{2}$}
\address{Mathematisches Institut der WWU M\"unster,
\newline Einsteinstrasse 62, 48149 M\"unster, Germany}
\email{lik@uni-muenster.de}

\author{Ryszard Nest$^{3}$}
\address{Department of Mathematical Sciences, University of Copenhagen,
\newline Universitetsparken 5, DK-2100 Copenhagen \O, Denmark}
\email{rnest@math.ku.dk}

\thanks{{$^{1}$} Supported by Deutsche Forschungsgemeinschaft (SFB 878, Groups, Geometry and Actions).}

\thanks{{$^{2}$} Supported by the Danish Council for Independent Research (DFF-5051-00037) and is partially supported by the DFG (SFB 878).}

\thanks{{$^{3}$} Supported by the Danish National Research Foundation through the Centre for Symmetry and Deformation (DNRF92).}

\begin{abstract}
The main purpose of this paper is to modify the orbit method for the Baum-Connes conjecture as developed by Chabert, Echterhoff and Nest 
in their proof of the  Connes-Kasparov conjecture for almost connected groups \cite{MR2010742} in order to deal with linear algebraic groups over local function fields (i.e., non-archimedean local fields of positive characteristic). As a consequence, we verify the Baum-Connes conjecture for certain Levi-decomposable linear algebraic groups over local function fields. One of these is the Jacobi group, which is the semidirect product of the symplectic group and the Heisenberg group.
\end{abstract}

\date{\today}
\maketitle
\parskip 4pt
 \section{Introduction}
 The Baum-Connes conjecture was first introduced by Paul Baum and Alain Connes in 1982 \cite{MR1769535}. However, the paper was published 18 years later and its current formulation was given in \cite{MR1292018}. The origin of the conjecture goes back to Connes' foliation theory \cite{MR679730} and Baum's geometric description of K-homology theory \cite{MR679698}.
 
Consider a locally compact second countable group $G$ and a separable $G$-$C^*$-algebra $A$. Let $\underline{E}(G)$ denote a locally compact universal proper $G$-space. Such a space always exists and is unique up to $G$-homotopy equivalence (see \cite{MR1292018} and \cite{MR1998480} for details). The topological K-theory of $G$ with coefficient $A$ is defined as
\begin{align*}
K_*^{\text{top}}(G;A):=\lim_X KK^G_*(C_0(X),A),
\end{align*} 
where $X$ runs through all $G$-invariant subspaces of $\underline{E}(G)$ such that $X/G$ is compact, and $KK^G_*(C_0(X),A)$ denotes Kasparov's equivariant KK-theory. Let $A\rtimes_rG$ denote the reduced crossed product of $A$ by $G$. Then the
Baum-Connes conjecture with coefficient $A$ states that the {\em assembly map}
$$\mu_A: K_*^{\text{top}}(G;A)\to K_*(A\rtimes_rG),$$
as constructed by Baum, Connes and Higson in \cite{MR1292018}, is an isomorphism of abelian groups. If this holds, we say that {\em $G$ satisfies
BC (the Baum-Connes conjecture) for $A$. }

The Baum-Connes conjecture has been verified for some large families of groups. In particular, Higson and Kasparov in \cite{MR1821144} proved the Baum-Connes conjecture with arbitrary coefficients for locally compact second countable groups having the Haagerup property (e.g. amenable groups). In \cite{MR1914617} Lafforgue proved the Baum-Connes conjecture for $\C$ for all reductive Lie groups whose semi-simple part has finite center and reductive linear algebraic groups over non-archimedean local fields. Later, Chabert, Echterhoff and Nest in \cite{MR2010742} used these results together with
the permanence properties of the conjecture as studied in \cite{MR1836047, MR2100669}
to verify the Connes-Kasparov conjecture, i.e., the Baum-Connes conjecture with trivial coefficients for all almost connected second countable groups.
Using similar methods, they also settled BC with trivial coefficient $\C$ for  all linear algebraic groups over local fields with zero characteristic.  However, the conjecture (even for $\C$) is still open for linear algebraic groups over local fields of positive characteristic.

A central step in the proof of the Connes-Kasparov conjecture in \cite{MR2010742} was the development of an orbit method for proving the 
Baum-Connes conjecture with coefficients built after the Mackey-Rieffel-Green orbit method for computing irreducible representations of crossed products
via the the orbit-structure for the action of $G$ on $\Prim(A)$. In \S2 we shall prove a slightly more general version of the orbit method 
for the Baum-Connes conjecture with coefficient $A$ which will work particularly well if $A$ is a type I C*-algebra.
This result, together with the permanence results for group extensions as obtained in \cite{MR1836047, MR2100669} then allows 
to formulate an orbit method for proving the Baum-Connes conjecture with trivial coefficients for a group $G$ 
which is an extension $1\to N\to G\to G/N\to 1$ such that $N$ and $G/N$ satisfy some good properties.

The idea of using the orbit method for the Baum-Connes conjecture first appeared in 
\cite {MR1836047}, where Chabert and Echterhoff proved the conjecture for $k^n\rtimes \SL_n(k)$ for an archimedean local field $k$. In order to apply the orbit method to linear algebraic $p$-adic groups we need  Kirillov's orbit method for $p$-adic unipotent groups, which is established by Moore in \cite{MR0181701}. Recently, based on Howe's early work in \cite{MR0492059}, Echterhoff and Kl\"uver obtained a version of Kirillov's orbit method for unipotent groups defined over local fields of positive characteristic $p$, provided that its nilpotence length is less than $p$ (see  \cite{MR3012147}). 
Combining this with the general results obtained in \S2 we shall prove the following results in \S3:

\begin{thmx}[see Corollary~\ref{orbit for BC}]\label{into1}
Let $k$ be a non-archimedean local field of positive characteristic $p$ and let $\mathfrak{n}$ be a finite-dimensional nilpotent $k$-Lie algebra with nilpotence length $l<p$. Let $N:=\exp(\mathfrak{n})$ be the corresponding unipotent $k$-group and let $G$ be a locally compact second countable exact group. Suppose that $G$ acts continuously on $N(k)$ such that all $G$-orbits in $\hat{N(k)}$ are locally closed.

Then the semidirect product group $N(k)\rtimes G$ satisfies BC for $\C$, if the stabilizer $G_\pi$ satisfies BC for $\mathcal{K}(H_\pi)$ for all $[\pi]\in \hat{N(k)}$.
\end{thmx}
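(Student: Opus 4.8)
The plan is to view $N(k)\rtimes G$ as the group extension $1\to N(k)\to N(k)\rtimes G\to G\to 1$ and to run it through the orbit-method machinery of \S2 together with the permanence results for extensions of \cite{MR1836047, MR2100669}. Passing first to the crossed product by the normal subgroup turns the trivial coefficient into $\C\rtimes N(k)=C^*(N(k))$, and since $N(k)$ is nilpotent, hence amenable, it satisfies BC with arbitrary coefficients by Higson--Kasparov \cite{MR1821144}; this is what lets the normal subgroup be absorbed and reduces the problem to BC for $G$ acting on the coefficient algebra $A:=C^*(N(k))$. I would therefore aim to verify the hypotheses under which the \S2 orbit method applies to the action $G\acts A$.

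The decisive input is that $A=C^*(N(k))$ is type I and that its spectrum carries a Kirillov-type parametrization. In characteristic zero this is Moore's $p$-adic orbit method \cite{MR0181701}, but over a local field of positive characteristic $p$ the exponential and Baker--Campbell--Hausdorff series are only available up to nilpotence length $<p$; this is exactly where the assumption $l<p$ enters and permits me to invoke the Kirillov orbit method of Echterhoff and Kl\"uver \cite{MR3012147}. From it I obtain that $A$ is a type I C*-algebra and a $G$-equivariant identification of $\Prim(A)$ with $\hat{N(k)}$, so that the locally closed orbits hypothesis applies verbatim to the action of $G$ on $\Prim(A)$.

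With $A$ type I, $G$ exact, and all $G$-orbits in $\Prim(A)=\hat{N(k)}$ locally closed, the orbit method of \S2 reduces BC for $(G,A)$ to BC for the stabilizers. For each $[\pi]\in\hat{N(k)}$, Mackey theory lets $\pi$ extend to a (possibly projective) representation of $N(k)\rtimes G_\pi$, so the subquotient of $A$ supported on the orbit $G\cdot[\pi]$ is $G$-equivariantly Morita equivalent to the induced algebra $\Ind_{G_\pi}^{G}\mathcal K(H_\pi)$. By the induction-compatibility of the assembly map, BC for this subquotient is equivalent to BC for $G_\pi$ with coefficient $\mathcal K(H_\pi)$, and assembling the contributions of the locally closed orbits via the permanence results yields BC for $(G,A)$, hence BC for $\C$ for $N(k)\rtimes G$, under the stated hypothesis on the $G_\pi$.

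The step I expect to be the main obstacle is the second one: securing the type I property and a genuinely $G$-equivariant Kirillov parametrization of $\hat{N(k)}$ in positive characteristic. The classical orbit method rests on the polynomial bijection $\exp\colon\mathfrak n\to N$ and on the group law being given by the Baker--Campbell--Hausdorff formula, both of which require the denominators up to $l!$ to be invertible; the constraint $l<p$ is precisely what keeps these constructions meaningful, and the remaining care lies in checking that the resulting topology on $\hat{N(k)}$ is compatible with the local-closedness hypothesis and that the stabilizers $G_\pi$ inherit the exactness needed to apply the \S2 argument fibrewise.
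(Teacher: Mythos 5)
Your proposal follows the same overall route as the paper: reduce BC with coefficient $\C$ for $N(k)\rtimes G$ to BC for $G$ with coefficient $C^*(N(k))$ via the extension permanence results (available because $N(k)$ is amenable, hence has the Haagerup property), obtain the type I property and the parametrization of $\widehat{N(k)}$ from the Echterhoff--Kl\"uver orbit method, and then apply the \S 2 machinery to reduce to the stabilizers. But there is a genuine gap at the crucial step. You assert that ``with $A$ type I, $G$ exact, and all $G$-orbits in $\Prim(A)$ locally closed, the orbit method of \S 2 reduces BC for $(G,A)$ to BC for the stabilizers.'' That is not the statement of Theorem~\ref{general-orbit} or Corollary~\ref{cor-typeI}: besides smoothness of the action, those results require that the Glimm stratification $\{U_\nu\}_\nu$ of $\widehat{A}$ can be chosen so that every stratum $(U_{\nu+1}\smallsetminus U_\nu)/G$ is either totally disconnected or homeomorphic to the geometric realisation of a finite-dimensional simplicial complex. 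This hypothesis is not a consequence of local closedness of the orbits, and Remark~\ref{rem-continuous-field} stresses that the authors do not know how to dispense with it. Your proposal never verifies it, and its verification is precisely the main content of the paper's proof of Corollary~\ref{orbit for BC}.

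The paper closes this gap by using the Kirillov theory a second time, beyond type I-ness. By Remark~\ref{normal orbit} the Kirillov homeomorphism $\mathfrak{n}^*/\operatorname{Ad}^*(N(k))\cong\widehat{N(k)}$ is equivariant for the actions of the acting group (this is where the equivariance you flagged as the expected obstacle is actually settled, by a computation with conjugates of induced representations), so that $\widehat{N(k)}/G\cong\mathfrak{n}^*/\operatorname{Ad}^*(N(k)\rtimes G)$. Each Hausdorff stratum $X_\nu=(U_{\nu+1}\smallsetminus U_\nu)/G$ then pulls back under the orbit map $p\colon\mathfrak{n}^*\to\mathfrak{n}^*/\operatorname{Ad}^*(N(k)\rtimes G)$ to a locally closed, hence locally compact, subset of $\mathfrak{n}^*$. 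Since $\mathfrak{n}^*$ is a finite-dimensional vector space over a non-archimedean local field, it is totally disconnected, so $p^{-1}(X_\nu)$ is a totally disconnected locally compact Hausdorff space; as $p$ restricts to a continuous open surjection $p^{-1}(X_\nu)\to X_\nu$ with $X_\nu$ Hausdorff, each stratum $X_\nu$ is totally disconnected. Only with this in hand do Corollary~\ref{orbit BC} and Remark~\ref{rem-semidirect} yield the reduction to BC for the stabilizers $G_\pi$ with coefficients $\mathcal{K}(H_\pi)$. Two minor points: the identification of the subquotient over an orbit with the induced algebra $\Ind_{G_\pi}^{G}\mathcal{K}(H_\pi)$ requires no Mackey extension or projective representations --- it is an isomorphism of $G$-algebras built into the proof of Theorem~\ref{general-orbit} via Glimm's theorem and the structure theorem for algebras fibred over $G/G_\pi$; and exactness of the stabilizers plays no role anywhere, only exactness of $G$ itself is used.
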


Notice that we do not require $G$ to be a linear algebraic group in Theorem~\ref{into1}. If we only consider linear algebraic groups which admit a Levi decomposition over local function fields, we can obtain the following simplified version:
\begin{thmx}[see Theorem~\ref{algebraic orbit method}]
Let $G$ be a linear algebraic group with a Levi decomposition $G=N\rtimes R$ over a non-archimedean local field $k$ of positive characteristic $p$. Assume that $N=\exp(\mathfrak{n})$ for a finite-dimensional nilpotent $k$-Lie algebra $\mathfrak{n}$ which  has nilpotence length $l<p$.

Then $G(k)=N(k)\rtimes R(k)$ satisfies BC for $\C$, if the stabilizer $R(k)_\pi$ satisfies BC for $\mathcal{K}(H_\pi)$ for all $[\pi]\in \hat{N(k)}$.
\end{thmx}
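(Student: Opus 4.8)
The plan is to deduce the statement directly from Theorem~\ref{into1} by taking the abstract acting group there to be the Levi factor $R(k)$. Writing $G(k)=N(k)\rtimes R(k)$, it suffices to verify that the pair $\big(N(k),R(k)\big)$ satisfies the three standing hypotheses of Theorem~\ref{into1}: that $R(k)$ is a second countable locally compact exact group, that it acts continuously on $N(k)$, and---the crux---that every $R(k)$-orbit in the unitary dual $\hat{N(k)}$ is locally closed. The stabilizer condition then transfers verbatim, since the stabilizer of $[\pi]$ inside the acting group $R(k)$ is precisely $R(k)_\pi$.

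First I would dispose of the two soft hypotheses. Since $R$ is a linear algebraic group over the local field $k$, the group $R(k)$ is second countable and locally compact; realizing $R$ as a closed $k$-subgroup of some $\GL_m$ exhibits $R(k)$ as a closed subgroup of $\GL_m(k)$, which is exact, and exactness passes to closed subgroups, so $R(k)$ is exact. Continuity of the action of $R(k)$ on $N(k)$ is immediate from the fact that the Levi decomposition $G=N\rtimes R$ is a decomposition of algebraic groups: the conjugation $R\to\Aut(N)$ is a $k$-morphism and hence restricts to a continuous action on $k$-points.

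The main work is the local closedness of the $R(k)$-orbits in $\hat{N(k)}$. Here I would invoke the Kirillov orbit method for unipotent groups over local fields of positive characteristic, available because $l<p$, in the form established by Echterhoff and Kl\"uver: it provides a homeomorphism $\hat{N(k)}\cong\mathfrak{n}^{*}(k)/N(k)$ between the unitary dual and the space of coadjoint orbits, where $N(k)$ acts by the coadjoint representation on the $k$-points of the vector-space dual $\mathfrak{n}^{*}$. I would then check that, under this identification, the $R(k)$-action on $\hat{N(k)}$ corresponds to the action on coadjoint orbits induced by the coadjoint action of the full group $G(k)$ on $\mathfrak{n}^{*}(k)$. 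Since $R$ normalizes $N$, the coadjoint quotient map $q\colon\mathfrak{n}^{*}(k)\to\mathfrak{n}^{*}(k)/N(k)$ is $R(k)$-equivariant, and a direct computation in the semidirect product shows $q\big(G(k)\cdot\xi\big)=R(k)\cdot[\xi]$ with $G(k)\cdot\xi$ being $N(k)$-saturated, i.e. $G(k)\cdot\xi=q^{-1}\big(R(k)\cdot[\xi]\big)$. Thus each $R(k)$-orbit in $\hat{N(k)}$ is exactly the image under $q$ of a single $G(k)$-orbit in $\mathfrak{n}^{*}(k)$.

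It therefore remains to show that the $G(k)$-orbits in $\mathfrak{n}^{*}(k)$ are locally closed and that their images descend to locally closed subsets of the orbit space. The first is the key geometric input and the step I expect to be the main obstacle: since $G$ is a linear algebraic group acting algebraically on the affine space $\mathfrak{n}^{*}$, each orbit is a locally closed subvariety for the Zariski topology, and for an algebraic action of a linear algebraic group over a local field one expects the $k$-rational orbits $G(k)\cdot\xi$ to be locally closed in $\mathfrak{n}^{*}(k)$ for the local-field topology; the natural argument is a submersion/Baire-category one, showing that the orbit map $G(k)\to G(k)\cdot\xi$ is open onto its image, so that the orbit is locally compact in the subspace topology and hence locally closed. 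To pass to the quotient I would use that $q$ is an open continuous surjection (orbit maps are always open) and that $G(k)\cdot\xi$ is $N(k)$-saturated, so that saturated locally closed sets correspond to locally closed sets downstairs; this yields local closedness of $R(k)\cdot[\xi]=q\big(G(k)\cdot\xi\big)$. With all three hypotheses of Theorem~\ref{into1} verified and the stabilizer condition on $R(k)_\pi$ in hand, the conclusion follows.
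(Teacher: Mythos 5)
Your proposal is correct and follows essentially the same route as the paper's proof of Theorem~\ref{algebraic orbit method}: reduce to Corollary~\ref{orbit for BC} (Theorem~\ref{into1}) with acting group $R(k)$, obtain exactness and continuity from the algebraic structure, and deduce local closedness of the $R(k)$-orbits in $\hat{N(k)}$ from local closedness of the coadjoint $G(k)$-orbits in $\mathfrak{n}^*$ via the $R$-equivariance of the Kirillov homeomorphism (which is exactly the content of Remark~\ref{normal orbit}). Two points of comparison. First, the ``key geometric input'' that you flag as the main obstacle is precisely the theorem of Bernstein and Zelevinsky that the paper quotes as Theorem~\ref{locally closed orbits}: rational actions of linear algebraic $k$-groups on $k$-varieties have locally closed orbits on $k$-points. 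You should cite this rather than rely on the submersion/Baire-category sketch, which is delicate in positive characteristic because orbit maps need not be separable; note also that the $k$-rationality of $\operatorname{Ad}^*$ on $\mathfrak{n}^*$ is exactly where the hypothesis $l<p$ enters (the denominators up to $l!$ in $\exp$, $\log$ and the Campbell--Hausdorff formula must be invertible), a point your sketch passes over. Second, where you descend local closedness through the quotient $q:\mathfrak{n}^*\to\mathfrak{n}^*/\operatorname{Ad}^*(N(k))$ by the elementary observation that $N(k)$-saturated locally closed sets have locally closed images, the paper instead applies Glimm's theorem twice (orbits locally closed if and only if the orbit space is almost Hausdorff) together with the homeomorphism $\hat{N(k)}/R(k)\cong \mathfrak{n}^*/\operatorname{Ad}^*(G(k))$; both arguments are valid, and yours is arguably the more elementary bookkeeping, but the underlying skeleton of the proof is the same.
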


As a consequence, we obtain the following corollary:
\begin{corx}[see Corollary~\ref{ex1} and Corollary~\ref{ex2}]\label{intocor}
 Let $k$ be a non-archimedean local field of positive characteristic $p>2$ and $n\in \N$. Then the following groups satisfy BC for $\C$:
\begin{itemize}
\item[(1)] $k^n\rtimes \GL_n(k)$, where $\GL_n(k)$ acts on $k^n$ by matrix multiplications.
\item[(2)] $k\rtimes \GL_n(k)$, where $\GL_n(k)$ acts on $k$ by $g.v=\det(g)^p \cdot v$ for $g\in \GL_n(k)$ and $v\in k$.
\item[(3)] $\HH_{2n+1}(k)\rtimes \Sp_{2n}(k)$, where $\HH_{2n+1}(k)$ denotes the $2n+1$ dimensional Heisenberg group over $k$ and  the symplectic group 
$\Sp_{2n}(k)$ acts on $k^{2n}\cong \HH_{2n+1}(k)/Z(\HH_{2n+1}(k))$ by matrix multiplications and trivially on the center $Z(\HH_{2n+1}(k))\cong k$.
\item[(4)] $k^{2n}\rtimes \Sp_{2n}(k)$, where 
$\Sp_{2n}(k)$ acts on $k^{2n} $ by matrix multiplications.
\item[(5)] $\HH_{2n+1}(k)\rtimes \Mp_{2n}(k)$, where the metaplectic group $\Mp_{2n}(k)$ acts on $\HH_{2n+1}(k)$ through the action of $\Sp_{2n}(k)$.
\item[(6)] $k^{2n}\rtimes \Mp_{2n}(k)$, where $\Mp_{2n}(k)$ acts on $k^{2n}$ through the action of $\Sp_{2n}(k)$.
\end{itemize}
\end{corx}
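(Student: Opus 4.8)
The plan is to deduce Corollary~\ref{intocor} from the simplified Theorem~\ref{algebraic orbit method} by checking, case by case, that each listed group is the $k$-points of a linear algebraic group with a Levi decomposition $G = N \rtimes R$ where $N = \exp(\mathfrak{n})$ for a nilpotent $k$-Lie algebra of nilpotence length $l < p$, and then verifying the stabilizer hypothesis: for every $[\pi] \in \widehat{N(k)}$, the stabilizer $R(k)_\pi$ must satisfy BC for $\mathcal{K}(H_\pi)$. In all six cases the reductive part $R$ is a general linear, symplectic, or metaplectic group, so I expect the stabilizers to again be (almost connected, or reductive $p$-adic) groups already known to satisfy Baum--Connes with arbitrary coefficients via Lafforgue's results cited in the introduction.

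\medskip

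For cases (1), (2), (4) the unipotent radical $N$ is abelian (it is $k^n$ or $k^{2n}$, so $\mathfrak{n}$ has nilpotence length $l = 1 < p$), and $\widehat{N(k)}$ is identified with the Pontryagin dual, a vector group on which $R(k) = \GL_n(k)$ or $\Sp_{2n}(k)$ acts linearly (through the dual/contragredient action). Here the first step is to read off the orbit structure of this linear action and check that all orbits are locally closed, which holds because the action is algebraic. The stabilizer $R(k)_\pi$ of a nonzero character is then a parabolic-type or smaller reductive subgroup; since $\pi$ is one-dimensional, $\mathcal{K}(H_\pi) = \C$, so I only need that $R(k)_\pi$ satisfies BC for $\C$. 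For the trivial character the stabilizer is all of $R(k)$, which is a reductive $p$-adic group satisfying BC by Lafforgue; for nontrivial characters one identifies the stabilizer explicitly (e.g. for $\GL_n$ acting on $k^n$ it is the affine group $k^{n-1} \rtimes \GL_{n-1}(k)$ up to the relevant identification) and argues inductively, or invokes that these stabilizers are again extensions of reductive $p$-adic groups by vector groups, to which the same machinery or a direct Haagerup-property argument applies.

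\medskip

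For cases (3) and (5) the unipotent radical is the Heisenberg group $\HH_{2n+1}(k)$, which is two-step nilpotent, so $l = 2 < p$ since we assume $p > 2$; this is exactly where the hypothesis $p > 2$ enters, guaranteeing $l < p$ so that the Echterhoff--Kl\"uver version of Kirillov's orbit method applies to $N$. The dual $\widehat{\HH_{2n+1}(k)}$ splits into the characters factoring through the abelianization $k^{2n}$ (one-dimensional representations, handled as above) together with the infinite-dimensional Schr\"odinger--Weil representations $\pi_\chi$ indexed by nontrivial central characters $\chi$. For the latter, the key point is the Stone--von Neumann theorem: the symplectic group $\Sp_{2n}(k)$ fixes the central character and acts on $\widehat{\HH_{2n+1}(k)}$ fixing each such $[\pi_\chi]$, so the stabilizer is all of $\Sp_{2n}(k)$ (respectively $\Mp_{2n}(k)$), and one must show it satisfies BC for $\mathcal{K}(H_{\pi_\chi})$ rather than merely for $\C$. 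Cases (4) and (6) are the quotients obtained by passing to $k^{2n}$, and the metaplectic cases (5), (6) follow from the symplectic ones since $\Mp_{2n}(k) \to \Sp_{2n}(k)$ is a central extension by a finite group, a situation preserved by the permanence properties.

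\medskip

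The main obstacle I anticipate is the verification that the stabilizers satisfy BC for the twisted coefficient $\mathcal{K}(H_\pi)$ in the infinite-dimensional cases (3) and (5), rather than just for the trivial coefficient $\C$. Here the correct mechanism is that the action of the stabilizer on $\mathcal{K}(H_\pi)$ is implemented by a genuine (or projective, absorbed by passing to $\Mp_{2n}$) unitary representation via the Weil representation, so that the crossed product $\mathcal{K}(H_\pi) \rtimes_r R(k)_\pi$ is Morita equivalent to $C^*_r(R(k)_\pi)$ with an inner-twisted action, reducing BC for $\mathcal{K}(H_\pi)$ to BC for $\C$; this is precisely the kind of untwisting for which Lafforgue's theorem (BC with arbitrary coefficients for reductive $p$-adic groups) is invoked. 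Once this reduction is in place, each case is a matter of bookkeeping the orbit structure and applying Theorem~\ref{algebraic orbit method}.
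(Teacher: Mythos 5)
Your overall skeleton---Theorem~\ref{algebraic orbit method} for the linear cases, Corollary~\ref{orbit for BC} for the metaplectic ones, Stone--von Neumann for the orbit picture---is the same as the paper's, but the argument you give for the crucial step fails. For the infinite-dimensional representations $\pi_\lambda$ the stabilizer is all of $\Sp_{2n}(k)$ (resp.\ $\Mp_{2n}(k)$), and one must show it satisfies BC for $\mathcal{K}(H_{\pi_\lambda})$ with a non-inner action. You propose to untwist via the Weil representation and then invoke ``Lafforgue's theorem (BC with arbitrary coefficients for reductive $p$-adic groups)''. No such theorem exists: Lafforgue \cite{MR1914617} proves BC for the trivial coefficient $\C$ only, and BC with coefficients is open for higher-rank $p$-adic groups such as $\Sp_{2n}(k)$, $n\geq 2$ (these have property (T)). Moreover, the untwisting does not land on $C^*_r(\Sp_{2n}(k))$ with an inner action: the Weil representation is only a \emph{projective} representation of $\Sp_{2n}(k)$, so $\mathcal{K}(H_{\pi_\lambda})\rtimes_r\Sp_{2n}(k)$ is Morita equivalent to a twisted group algebra, i.e.\ to a summand of $C^*_r$ of a central extension; and that extension ($\Mp_{2n}(k)$, or a $\mathbb{T}$-extension) is not a linear group, so even BC for $\C$ for it is not covered by the literal statement of Lafforgue's theorem. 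This is exactly the hole that the paper's Proposition~\ref{reductive} fills: compact extensions of reductive $p$-adic groups satisfy BC for $\mathcal{K}$ with respect to \emph{any} action; its proof reduces, via \cite[Proposition~2.7]{MR2010742}, to BC for $\C$ for all central extensions by $\mathbb{T}$, and then verifies Lafforgue's property (HC) and the proper action on the Bruhat--Tits building for these non-linear groups. Without this proposition (or an equivalent argument) your reduction is incomplete at precisely the point you yourself flag as the main obstacle.

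Two further steps are wrong as stated. First, cases (5) and (6) do not ``follow from the symplectic ones by permanence properties'': BC for $\C$ does not pass from a quotient to a finite central extension (the twisted part of $C^*_r(\Mp_{2n}(k))$ is invisible to $\Sp_{2n}(k)$), and likewise case (4) is not a ``quotient'' of case (3)---BC is not known to pass to quotients at all. The paper instead re-runs the orbit method for $\Mp_{2n}(k)$ via Corollary~\ref{orbit for BC} (Theorem~\ref{algebraic orbit method} is unavailable because $\Mp_{2n}(k)$ is not algebraic), using Proposition~\ref{reductive}---formulated for compact extensions exactly for this purpose---for the representations $\pi_\lambda$, and the Haagerup property of $\Mp_2(k)$ \cite{MR1756981} for the base case. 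Second, your description of the character stabilizers as ``parabolic-type or smaller reductive'' subgroups is false in the symplectic cases: the stabilizer of a nonzero point of $\hat{k}^{2n}$ under $\Sp_{2n}(k)$ is the lower-rank Jacobi group $\HH_{2(n-1)+1}(k)\rtimes\Sp_{2(n-1)}(k)$, which is neither reductive nor parabolic. This forces the induction on $n$ that the paper carries out, intertwining cases (3) and (4), with base cases $\HH_3(k)\rtimes\SL_2(k)$ and $k^2\rtimes\SL_2(k)$ settled by the Haagerup property of $\SL_2(k)$ together with \cite[Theorem~2.1]{MR2100669} and Higson--Kasparov \cite{MR1821144}, not by Lafforgue's theorem.
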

In Corollary~\ref{intocor} the condition $p>2$ is not necessary for (1) and (2). 

\ \newline
{\bf Acknowledgments}. The second-named author would like to thank Roger Howe, Vincent Lafforgue, George McNinch and Maarten Solleveld for helpful and enlightening discussions on a wide variety of topics.

\section{The orbit method for the Baum-Connes conjecture with coefficients}

In this section we want to discuss a general orbit method inspired by Mackey's theory of induced representations 
in order to prove the Baum-Connes conjecture for type I coefficients $A$ depending on the conjecture for the stabilisers 
$G_\pi$  for the action of $G$ on the space $\widehat{A}$ of equivalence classes of irreducible representations of $A$.
The results of this section slightly extend similar results obtained in \cite{MR2010742}.
We need to recall a classical theorem of  Glimm. Recall that a (not necessarily Hausdorff)  topological space is called 
   \begin{itemize}
\item \emph{locally compact} if every point has a neighborhood basis consisting of compact sets, and
\item \emph{almost Hausdorff} if every non-empty closed subset contains a non-empty relatively open Hausdorff subset.
\end{itemize}
Moreover, a subset $Y\subseteq X$ is called \emph{locally closed in $X$} if $Y$ is relatively open in its closure $\bar{Y}\subseteq X$. 
Equivalently, $Y$ is  an intersection of an open set and a closed set in $X$.

It is well-known that every locally closed subset of a locally compact space is locally compact. Conversely, every locally compact subset of an almost Hausdorff locally compact space is necessarily locally closed.
\begin{thm}[Glimm's Theorem, see {\cite[Theorem~1]{MR0136681}} and {\cite[Remark~3.5]{MR2010742}}]\label{glimm}
Let $G$ be a locally compact second countable group and let $X$ be a locally compact second countable almost Hausdorff $G$-space. Then the following are equivalent:
\begin{itemize}
\item[(1)] Each orbit $G\cdot x$ is locally closed.
\item[(2)] For each $x\in X$, the map $gG_x\mapsto g\cdot x$ is a homeomorphism from $G/G_x$ onto $G\cdot x$.
\item[(3)] The orbit space $X/G$ is almost Hausdorff.
\item[(4)] There exists a sequence of $G$-invariant open subsets $\{U_\nu\}_\nu$ of $X$, where $\nu$ runs through the ordinal numbers such that
\begin{itemize}
\item[(a)] $U_\nu\subseteq U_{\nu+1}$ for each $\nu$ and $(U_{\nu+1}\backslash U_\nu)/G$ is Hausdorff.
\item[(b)] If $\nu$ is a limit ordinal, then $U_\nu=\bigcup_{\mu<\nu}U_\mu$.
\item[(c)] There exists an ordinal number $\nu_0$ such that $X=U_{\nu_0}$.
\end{itemize}
\end{itemize}
\end{thm}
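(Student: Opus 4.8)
The plan is to prove the four conditions equivalent through a short chain of formal implications together with two substantial ones, using throughout the preliminary remark that, in a locally compact almost Hausdorff space, local compactness of a subset is equivalent to its being locally closed. With this in hand the implication $(2)\Rightarrow(1)$ is immediate: the homogeneous space $G/G_x$ is locally compact, since $G$ is locally compact and $G_x$ is closed, so a homeomorphism $G/G_x\xrightarrow{\sim}G\cdot x$ exhibits the orbit as a locally compact subset of $X$, hence as a locally closed one.

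The first substantial step is $(1)\Rightarrow(2)$. The orbit map $\beta_x\colon gG_x\mapsto g\cdot x$ is always a continuous, $G$-equivariant bijection of $G/G_x$ onto $G\cdot x$, so the only issue is to show it is open, and here I would run the classical Baire-category open-mapping argument (in this second countable setting this is essentially Effros's theorem). Granting $(1)$, the orbit is locally closed, hence locally compact; moreover the almost Hausdorff property of $\overline{G\cdot x}$ supplies a non-empty relatively open Hausdorff subset, which by homogeneity renders the orbit Hausdorff, so it is a Baire space. Since $G$ is second countable it is $\sigma$-compact, so $G\cdot x$ is covered by countably many translates $g_n\cdot(V\cdot x)$ for a fixed compact neighbourhood $V$ of the identity; Baire category forces one, hence by homogeneity every, such set to have non-empty interior, and equivariance then propagates openness of $\beta_x$ from a neighbourhood of the trivial coset to all of $G/G_x$.

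The equivalence $(3)\Leftrightarrow(4)$ is of a more bookkeeping nature, and I would treat it by transfinite induction on $X/G$. For $(3)\Rightarrow(4)$, at each stage one peels off from the (closed, hence still almost Hausdorff) remainder its maximal relatively open Hausdorff subset, pulls the corresponding $G$-invariant open set back to $X$ to define $U_{\nu+1}\setminus U_\nu$, takes unions at limit ordinals, and uses second countability to guarantee that the process terminates at some $\nu_0$ with $X=U_{\nu_0}$; the converse $(4)\Rightarrow(3)$ follows by reassembling the Hausdorff slices. The same filtration also yields $(4)\Rightarrow(1)$: each orbit meets a first slice $U_{\nu+1}\setminus U_\nu$ whose quotient is Hausdorff, so the orbit is closed in that slice and therefore locally closed in $X$.

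The main obstacle, and the genuine heart of Glimm's theorem, is the remaining implication $(1)\Rightarrow(3)$, which I would prove in contrapositive form: if some orbit $G\cdot x$ fails to be locally closed, then $X/G$ is not almost Hausdorff. This is where the deep content lies, because failure of local closedness means precisely that $G\cdot x$ is meager in its closure $\overline{G\cdot x}$, and one must convert this self-accumulation into a genuine non-separation phenomenon in the quotient. Following Glimm (and Effros in the Polish case), I would construct inside $\overline{G\cdot x}$ a Cantor scheme of translates whose images in $X/G$ form a subspace admitting no non-empty relatively open Hausdorff piece, contradicting $(3)$. Carrying out this dichotomy by a careful Baire-category argument on the closure of the non-locally-closed orbit is the step I expect to be genuinely hard; the remaining implications are comparatively formal once the open-mapping theorem and the transfinite peeling are in place.
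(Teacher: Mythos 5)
The paper does not actually prove this theorem: it quotes it as Glimm's Theorem, citing \cite[Theorem~1]{MR0136681} and \cite[Remark~3.5]{MR2010742}, so your attempt can only be measured against the classical argument. Measured that way, it has a genuine logical gap: your implication structure never closes. You correctly sketch $(2)\Rightarrow(1)$, $(1)\Rightarrow(2)$ (Effros-type Baire category), $(3)\Leftrightarrow(4)$ (transfinite peeling), and $(4)\Rightarrow(1)$. But in the paragraph you call the ``genuine heart'' you have mis-stated the contrapositive: the contrapositive of $(1)\Rightarrow(3)$ is ``if $X/G$ is \emph{not} almost Hausdorff, then \emph{some} orbit is not locally closed,'' whereas what you state and then sketch --- ``if some orbit fails to be locally closed, then $X/G$ is not almost Hausdorff'' --- is the contrapositive of $(3)\Rightarrow(1)$. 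The Glimm--Effros condensation/Cantor-scheme machinery you invoke is indeed the tool for exactly that statement (a non-locally-closed orbit produces a non-separated family of orbits, hence a non-almost-Hausdorff quotient), so your ``hard step'' proves $(3)\Rightarrow(1)$. Combined with your other arrows, every implication points out of $(3)$/$(4)$ and into $(1)$/$(2)$: nothing takes you from $(1)$ (or $(2)$) back to $(3)$ or $(4)$, so the four conditions are not shown to be equivalent. The direction that is actually hard, and that your proposal omits entirely, is $(1)\Rightarrow(4)$ (equivalently $(1)\Rightarrow(3)$): assuming only that all orbits are locally closed, one must \emph{construct} a nonempty $G$-invariant open set with Hausdorff quotient in every closed invariant subset and then exhaust transfinitely; this is where Glimm's Baire-category-plus-second-countability selection argument lives, and the Cantor-scheme argument cannot be run in reverse to supply it. (Incidentally, $(3)\Rightarrow(1)$ does not need a Cantor scheme at all: a point of the almost Hausdorff space $X/G$ is dense in its closure, and a nonempty relatively open Hausdorff subset of that closure is forced to be the singleton itself, so points of $X/G$, hence orbits in $X$, are locally closed.)

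A secondary, repairable issue: in $(1)\Rightarrow(2)$ you claim that homogeneity plus a relatively open Hausdorff piece ``renders the orbit Hausdorff.'' A homogeneous, locally Hausdorff space need not be Hausdorff (the line with two origins is a standard counterexample), so this step is not valid as written. What your covering argument does give is that the orbit is locally a Baire space, and a space covered by open Baire subspaces is Baire, which suffices for the category half of the Effros argument; but the standard argument also uses Hausdorffness of the orbit to know that the compact sets $Vx$ are closed (so that a non-meager one has interior), so this point genuinely needs the more careful treatment Glimm gives it rather than a one-line appeal to homogeneity.
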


\begin{defi}\label{defn-smooth}
If $X$ is a second countable locally compact almost Hausdorff $G$-space which satisfies the items (1),\ldots, (4) in Glimm's theorem,
 we say that the action of $G$ on $X$ is {\em smooth} (or $X$  is a {\em smooth} $G$-space).
 \end{defi}

We want to use Glimm's theorem to formulate a general orbit method for the Baum-Connes conjecture. For maximal flexibility we want 
to consider C*-algebras which can be thought of as section algebras of fields of C*-algebras over  an
almost Hausdorff space $X$ in the sense that there exists a continuous map $\varphi:\Prim(A)\to X$. 
In this situation, if $Y\subseteq X$ is any locally closed subset, we may identify the closed subset 
$\varphi^{-1}(\overline{Y})$ with the primitive ideal space of the quotient $A/I_Y$ with  $I_Y:=\cap\{P: P\in \varphi^{-1}(\overline{Y})\}$.
In a second step, we can identify the open subset $\varphi^{-1}(Y)$ of $\varphi^{-1}(\overline{Y})=\Prim(A/I_Y)$ with 
the primitive ideal space $\Prim(J_Y/I_Y)$ where $J_Y=\cap\{Q: Q\in \varphi^{-1}(\overline{Y}\smallsetminus Y)\}$. Hence 
for every locally closed subset $Y$ of $X$ we obtain a canonical subquotient $A_Y: =J_Y/I_Y$ of $A$ such that $\varphi^{-1}(Y)\cong \Prim(A_Y)$
In particular, since every one-point set $\{x\}$ of an almost Hausdorff space $X$ is locally closed, we obtain 
a canonical subquotient $A_x:=J_x/I_x$ such that $\varphi^{-1}(\{x\})\cong \Prim(A_x)$.
We call $A_x$ the {\em fibre of $A$ over $x$}.

We should note that if $X$ is Hausdorff, then it follows from the Dauns-Hoffmann theorem (see e.g. \cite[Theorem~A.34]{MR1634408}) that the continuous function $\varphi:\Prim(A)\to X$ 
induces a unique structure $\Phi:C_0(X)\to \mathcal{ZM}(A)$ of $A$ as a $C_0(X)$-algebra (here $\Phi$ is a nondegenerate $*$-homomorphism into the centre
$\mathcal{ZM}(A)$ of the multiplier algebra of $A$) such that 
$$I_x=\cap\{P: P\in \varphi^{-1}(\{x\})\}=\Phi(C_0(X\setminus\{x\}))A.$$
It follows that the fibre $A_x$ constructed above coincides with the usually defined fibre $A_x=A/(\Phi(C_0(X\smallsetminus\{x\}))A)$ of the 
$C_0(X)$-algebra $A$. Moreover, if $\varphi:\Prim(A)\to X$ is continuous {\bf and} open, then the corresponding $C_0(X)$-algebra is 
the section algebra of a continuous field of C*-algebras over $X$ with fibres $A_x$. For a reference see \cite[Theorem C.26]{MR2288954}.

We are now ready for the formulation of the general orbit method for BC with coefficients:

\begin{thm}[General orbit method for BC]\label{general-orbit}
Let $\alpha:G\to \Aut(A)$ be an action of  an exact second countable group $G$  on  a separable C*-algebra $A$.  Suppose 
that $X$ is a smooth $G$-space as in Definition \ref{defn-smooth} and  that $\varphi:\Prim(A)\to X$ is a continuous and open $G$-equivariant map. Then 
for each $x\in X$ the action $\alpha$ of $G$ on $A$ factors through a well-defined action $\alpha_x$ of the stabiliser $G_x$ on the fibre $A_x$ 
over $x$. Suppose further that the following two conditions are satisfied:
\begin{enumerate}
\item For each $x\in X$ the stabiliser $G_x$ satisfies BC for $A_x$.
\item  The sequence of  $G$-invariant open subsets $\{U_\nu\}_\nu$ of $X$ as in item (4) of Glimm's theorem
can be chosen such that $(U_{\nu+1}\smallsetminus U_\nu)/G$ is either totally disconnected or homeomorphic to the geometric realisation of a  finite-dimensional simplicial complex.
\end{enumerate}
Then $G$ satisifies BC for $A$.
\end{thm}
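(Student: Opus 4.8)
The plan is to transport the Glimm decomposition of $X$ to a transfinite filtration of $A$ by $G$-invariant ideals, reduce the conjecture to the resulting subquotients by permanence, and then treat each subquotient as a continuous field over a Hausdorff base whose fibres are induced algebras. First I would use smoothness of $X$ to fix the sequence $\{U_\nu\}_\nu$ of $G$-invariant open sets from item (4) of Glimm's theorem. Pulling back along the continuous, open, $G$-equivariant map $\varphi$ produces an increasing transfinite chain $\{\varphi^{-1}(U_\nu)\}_\nu$ of $G$-invariant open subsets of $\Prim(A)$, hence a chain of $G$-invariant ideals $I_\nu\triangleleft A$ with $I_0=0$, $I_{\nu_0}=A$, and $I_\nu=\overline{\bigcup_{\mu<\nu}I_\mu}$ at limit ordinals; the successive quotients $I_{\nu+1}/I_\nu$ are precisely the subquotients $A_{U_{\nu+1}\smallsetminus U_\nu}$ attached to the locally closed slices $U_{\nu+1}\smallsetminus U_\nu$, as set up before the theorem.

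The reduction then rests on two permanence properties. Because $G$ is exact, each extension $0\to I_\nu\to I_{\nu+1}\to I_{\nu+1}/I_\nu\to 0$ stays exact after applying $-\rtimes_r G$, so BC is inherited in the two-out-of-three sense along it; and BC is continuous along the C*-inductive limits $I_\nu=\varinjlim_{\mu<\nu}I_\mu$ occurring at limit ordinals, since both sides of the assembly map commute with inductive limits. A transfinite induction on $\nu$, starting from the trivial case $I_0=0$, therefore reduces the theorem to showing that $G$ satisfies BC for each subquotient $B:=A_{U_{\nu+1}\smallsetminus U_\nu}$.

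To handle a fixed $B$, set $Y:=U_{\nu+1}\smallsetminus U_\nu$, so that $\Prim(B)\cong\varphi^{-1}(Y)$ and, by Glimm's theorem, $Y/G$ is Hausdorff with each orbit $G\cdot x\cong G/G_x$. Composing $\varphi$ with the open quotient map $Y\to Y/G$ and invoking the Dauns--Hoffmann picture recalled above realises $B$ as a $C_0(Y/G)$-algebra --- indeed a continuous field, since the composite remains open --- on which $G$ acts $C_0(Y/G)$-linearly, i.e. trivially on the base. The fibre $B_{[x]}$ over an orbit $[x]$ is the restriction of $B$ to $G\cdot x$, which, because $G\cdot x\cong G/G_x$, is the induced algebra $\Ind_{G_x}^G A_x$ built from the $G_x$-fibre $A_x$. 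By Green's imprimitivity theorem $\bigl(\Ind_{G_x}^G A_x\bigr)\rtimes_r G$ is Morita equivalent to $A_x\rtimes_r G_x$, and BC is preserved under Morita equivalence and under induction from the closed subgroup $G_x$; hence hypothesis (1) says exactly that each fibre $B_{[x]}$ of the field $B$ satisfies BC for $G$.

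The hard part is to promote this fibrewise statement over the Hausdorff base $Z:=Y/G$ to BC for the whole field $B$, and this is where hypothesis (2) is indispensable, since no such continuity holds for general $Z$. If $Z$ is totally disconnected, I would write $B$ as a C*-inductive limit of finite direct sums of its restrictions to a refining family of clopen sets and conclude by the same continuity and finite-direct-sum permanence used above. If $Z$ is the geometric realisation of a finite-dimensional simplicial complex, I would induct on the dimension of its skeleta: the closed $(n-1)$-skeleton and the disjoint open $n$-cells give a $G$-equivariant extension of the corresponding field, the contribution of the contractible open cells is accessible by a homotopy/continuity argument as in the totally disconnected base case, and the middle term is controlled by the Mayer--Vietoris (six-term) sequence for the assembly map together with exactness of $G$; finite-dimensionality guarantees the induction terminates at $Z$. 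I expect this patching --- controlling how the fibrewise Morita equivalences vary continuously and assemble across the simplicial structure --- to be the principal technical obstacle; once it is in place, combining the two cases completes the inductive step and hence the proof.
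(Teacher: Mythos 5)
Your reduction steps coincide with the paper's own proof of Theorem \ref{general-orbit}: pulling the Glimm filtration $\{U_\nu\}_\nu$ back to a transfinite chain of $G$-invariant ideals, running a transfinite induction that uses exactness of $G$ at successor steps and continuity of both sides of the assembly map at limit ordinals, realising each stratum as a field over the Hausdorff orbit space $(U_{\nu+1}\smallsetminus U_\nu)/G$, identifying the fibre over an orbit with $\Ind_{G_x}^G A_x$ via item (2) of Glimm's theorem, and invoking the Chabert--Echterhoff induction theorem to deduce BC for each such fibre from hypothesis (1). All of this is exactly how the paper proceeds.

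The genuine gap is in your final step, which is precisely the point the paper does \emph{not} attempt to reprove from scratch: it isolates it as Proposition \ref{prop-continuous-field} and reduces it to \cite[Proposition 3.1]{MR2010742}, the only new ingredient being a substitute for the $\gamma$-element hypothesis used there. Since $G$ is exact it is amenable at infinity, so Higson's argument produces a natural splitting $\sigma_B$ of the assembly map for every coefficient $B$, hence natural idempotents $\gamma_B=\mu_B\circ\sigma_B$ on $K_*(B\rtimes_rG)$ with image $\mu_B(K_*^{\top}(G;B))$, compatible (again by exactness) with the six-term sequences attached to any $G$-extension. Then ``$G$ satisfies BC for $B$'' becomes the vanishing statement $(1-\gamma_B)\cdot K_*(B\rtimes_rG)=0$, and the induction over the base space in \cite{MR2010742} goes through verbatim. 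Your sketch instead manipulates the property ``satisfies BC'' directly, and both branches break down. In the totally disconnected case, $B$ is \emph{not} an inductive limit of finite direct sums of algebras for which BC is known: restrictions to clopen sets are again fields of the same kind (continuous fields over totally disconnected spaces need not be locally trivial), the fibres are quotients of these restrictions, and BC is not known to pass to quotients. In the simplicial case, the open-cell contribution is exactly the hard point: a continuous field over a contractible cell need not be trivial, and the $K$-theory of a section algebra is not homotopy invariant in the base unless the field is a pullback, so there is no ``homotopy/continuity argument'' to quote; this cannot be patched by the skeletal extension argument, which only supplies the (valid, one-directional) implication ``BC for ideal and quotient implies BC for extension'' once BC for the open-cell ideal is already known. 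Both difficulties are what the idempotent formulation resolves, since the defect groups $(1-\gamma_B)\cdot K_*(B\rtimes_rG)$ sit in honest six-term exact sequences and vanishing statements can be chased through them. The paper's Remark \ref{rem-continuous-field} underlines that this step is the real obstruction --- it is the reason hypothesis (2) on the strata cannot currently be removed.
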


For the proof we need the following slight generalisation of \cite[Proposition 3.1]{MR2010742}.

\begin{prop}\label{prop-continuous-field}
Suppose that $\alpha:G\to \Aut(A)$ is an action of the second countable exact group on the separable C*-algebra $A$ 
and let $X$ be a second countable locally compact Hausdorff space equipped with the {\bf trivial $G$-action}. Suppose further that $\varphi:\Prim(A)\to X$ 
is  a continuous open $G$-invariant map such that the following two conditions are satisfied:
\begin{enumerate}
\item For all $x\in X$ the group $G$ satisfies BC for $A_x$.
\item $X$ is either totally disconnected or 
homeomorphic to the geometric realisation of a  finite-dimensional simplicial complex. 
\end{enumerate}
Then $G$ satisfies BC for $A$.
\end{prop}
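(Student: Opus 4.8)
The plan is to build on the continuous-field structure recalled just above: since $\varphi\colon\Prim(A)\to X$ is continuous and open and $X$ is Hausdorff, $A$ is the section algebra of a continuous field of C*-algebras over $X$ with fibres $A_x$, and because $G$ acts trivially on $X$ and $\varphi$ is $G$-invariant, the action $\alpha$ is fibrewise, restricting to a $G$-action on each $A_x$. The whole argument rests on the fact that $\mu$ is natural in the coefficient algebra and that, using exactness of $G$, both functors $K_*^{\text{top}}(G;-)$ and $K_*(-\rtimes_rG)$ send $G$-equivariant extensions to six-term exact sequences (by the permanence results of \cite{MR1836047,MR2100669}), commute with countable inductive limits and direct sums, and are $G$-homotopy invariant. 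Consequently the class of separable $G$-algebras for which $\mu$ is an isomorphism — call them \emph{BC-algebras} — is closed under the two-out-of-three property for extensions, under countable direct sums, under inductive limits, and under $G$-homotopy equivalence; in particular every $G$-contractible algebra is a BC-algebra. I must show $A$ belongs to this class given that every fibre $A_x$ does.

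Next I reduce to fields over elementary bases. In the totally disconnected case $X$ is a countable disjoint union of compact open sets, so $A$ is the corresponding countable direct sum and we reduce to $X$ compact; refining finite clopen partitions $X=\bigsqcup_{V\in\mathcal P}V$ then gives $A=\bigoplus_{V}A_V$ and reduces to fields over clopen sets of arbitrarily small diameter. In the simplicial case finite-dimensionality yields a \emph{finite} skeletal filtration $X^{(0)}\subseteq\cdots\subseteq X^{(d)}=X$ by closed subsets, with $G$-equivariant extensions
\[
0\to A_{X^{(k)}\smallsetminus X^{(k-1)}}\to A_{X^{(k)}}\to A_{X^{(k-1)}}\to 0,
\]
so that two-out-of-three and induction on $k$ reduce BC for $A$ to BC for $A_{X^{(0)}}$ — a countable direct sum of the vertex fibres, hence a BC-algebra by hypothesis — and to BC for $A_{X^{(k)}\smallsetminus X^{(k-1)}}$, which is a countable direct sum over the open $k$-cells of fields $A_{\mathring\sigma}$ with $\mathring\sigma\cong\R^{k}$. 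Either way the problem collapses to a single continuous field over a cell, or over an arbitrarily small clopen set, all of whose fibres are BC-algebras.

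I expect this last reduction to be the main obstacle, and it is subtler than it first appears. A direct contraction is circular: pulling the field $B:=A_{\mathring\sigma}$ back along a contraction of $\R^k$ to the origin realises it inside a field over $\R^k\times[0,1]$ whose only associated subquotients are the constant field $C_0(\R^k)\otimes B_0$ and a suspension of $B$ itself; formal homotopy invariance and two-out-of-three then merely reproduce the tautology that $B$ is a BC-algebra if and only if $B$ is, and give no new information beyond the (already available) fibre $B_0$. What is genuinely needed is a \emph{fibrewise descent} for the assembly map: one organises $K_*^{\text{top}}(G;B)$ and $K_*(B\rtimes_rG)$ in a spectral sequence whose $E_2$-page is assembled from the $K$-theory of the fibres $B_x$, with $\mu$ inducing a morphism of such spectral sequences which is an isomorphism on $E_2$ because it is an isomorphism on every fibre; the finite covering dimension of $X$ guarantees convergence, whence $\mu_B$ is an isomorphism. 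This is precisely the step in which hypothesis (2) is indispensable, and where the argument follows and slightly extends the continuous-field techniques of \cite{MR2010742}. Assembling the pieces, the cell (resp.\ small-clopen) case supplies the subquotients, the skeletal (resp.\ partition) filtration together with two-out-of-three propagates the BC-property up to all of $A$, and the direct-sum and inductive-limit reductions of the first two steps then yield BC for $A$.
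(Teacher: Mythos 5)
There is a genuine gap, and it sits exactly where the actual content of this proposition lies. The paper's proof has essentially one new ingredient: a substitute for the $\gamma$-element of \cite{MR2010742}. Since $G$ is exact, it is amenable at infinity (\cite{BrCaLi16}, extending \cite{MR1763912}), and Higson's argument \cite{MR1779613} (in the form of \cite[Theorem 1.9]{MR2100669}, using Tu's theorem for the amenable groupoid $Y\rtimes G$ and the Going-Down principle) produces a \emph{natural} splitting $\sigma_B$ of $\mu_B$ for every coefficient $B$, hence natural idempotents $\gamma_B=\mu_B\circ\sigma_B$ on $K_*(B\rtimes_rG)$ with image equal to the image of $\mu_B$. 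This converts ``$G$ satisfies BC for $B$'' into the vanishing of the obstruction group $F(B):=(1-\gamma_B)\cdot K_*(B\rtimes_rG)$, which (by exactness of $G$ and naturality of $\gamma$) is a half-exact, countably additive functor commuting with inductive limits; with that in hand one quotes the topological induction of \cite[Proposition 3.1]{MR2010742} verbatim. Your proposal never constructs anything of this kind. Working instead with ``the class of BC-algebras'' and its closure properties is not an adequate replacement, because the decisive step of the argument is \emph{class-by-class localization}: given an obstruction class $z\in F(A)$, the continuity $A_x=\lim_{V\ni x}A_V$ over shrinking neighbourhoods shows that $z$ dies on some small piece around each $x$, and compactness plus the partition/skeleton structure then kills $z$ globally. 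The property ``$\mu_{A_V}$ is an isomorphism'' does not localize this way; only the vanishing of a functorial subgroup of $K_*(\cdot\rtimes_rG)$ does. This is also why your reduction in the totally disconnected case is circular: a continuous field over an ``arbitrarily small'' clopen set is exactly as general as the field you started with, so nothing has been reduced.

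The same gap reappears, undisguised, at the end of your simplicial argument. You correctly observe that the naive contraction of an open cell is circular, but the ``fibrewise descent'' you then invoke --- a spectral sequence with $E_2$-page built from the $K$-theory of the fibres, a morphism of spectral sequences induced by $\mu$, convergence from finite covering dimension --- is not constructed in your proof, is not available off the shelf for assembly maps of $C_0(X)$-algebras (identifying an $E_2$-term fibrewise for a general continuous field is precisely the kind of statement that fails without further hypotheses), and in this generality is essentially a restatement of the proposition itself. The way the cell case is actually handled in \cite{MR2010742} is again through the obstruction groups: one shows $F$ vanishes on the field over a cell by the same continuity-plus-cutting induction, with hypothesis (2) entering only to make that induction finite. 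So the two steps you would need to supply are (i) the construction of the natural idempotents $\gamma_B$ from exactness (the only part of the proof that is new relative to \cite{MR2010742}), and (ii) the vanishing argument for $F$ over cells and over totally disconnected bases, neither of which your proposal contains.
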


\begin{rem}\label{rem-continuous-field}
It is really annoying that we are not able to remove the condition that  $X$ is totally disconnected or 
the geometric realisation of a finite dimensional simplicial complex. Although these assumptions 
are satisfied in many important situations, these assumptions result in the corresponding technical formulations 
for the asscending series of sets $\{U_\nu\}_{\nu}$ in the formulation of the orbit method for BC.
It would therefore be most desirable to give a proof of the above proposition without these assumptions!
\end{rem}

We should note that the statement  in Proposition~\ref{prop-continuous-field} implies that $A$ is the section algebra of a continuous field of C*-algebras over 
$X$ with fibres $A_x$ such that the action of $G$ on $A$ is $C_0(X)$-linear (i.e., $\alpha_g(\Phi(f)a)=\Phi(f)\alpha_g(a)$ for all $g\in G, f\in C_0(X)$ and 
$a\in A$). 

\begin{proof} [Proof of Proposition~\ref{prop-continuous-field}]
Note first that the only difference to the statement of \cite[Proposition 3.1]{MR2010742} is the fact, that we do not require 
that $G$ has a $\gamma$-element. To show that the proof given in \cite{MR2010742} still holds without the assumption of a $\gamma$-element, we 
need a substitute of \cite[Lemma 3.2]{MR2010742}. For this we use a recent result of \cite{BrCaLi16}, extending an earlier result by 
Ozawa \cite{MR1763912}, showing that a locally compact second countable group $G$ is exact if and only if it is {\em amenable at infinity}, which means that there exists a topologically amenable action of $G$ on a second countable compact Hausdorff space $Z$, say. 
It has been shown by Higson \cite{MR1779613} (see \cite[Theorem 1.9]{MR2100669} for the non-discrete case) that this implies
split injectivity for the Baum-Connes assembly map for every coefficient algebra $B$.

We need to recall the arguments for this fact:  let $Y$ be the space of 
probability measures on $Z$, we see that $Y$ is $K$-equivariantly contractible for any compact subgroup $K$ of $G$.
Then following the arguments of Higson in \cite{MR1779613} (see also \cite[Theorem 1.9]{MR2100669}), we see that for each $G$-C*-algebra $B$ 
we obtain a commutative diagram 
\begin{equation}\label{eq-diagram}
\begin{CD}
K_*^{\top}(G;B)    @>\mu_B >> K_*(B\rtimes_rG)\\
@ V(\iota_B)_* VV            @VV(\iota_B\rtimes_r G)_* V\\
K_*^{\top}(G; C(Y)\otimes B)    @>>\mu_{C(Y)\otimes B}>  K_*((C(Y)\otimes B)\rtimes_rG)
\end{CD}
\end{equation}
where $\iota_B: B\to C(Y)\otimes B$ is given by $\iota_B(b)=1_Y\otimes b$. In this diagram the left vertical map is an isomorphism by 
an application of the Going-Down principle \cite[Theorem 1.5]{MR2100669} (see the proof of \cite[Theorem 1.9]{MR2100669}). The
lower horizontal map is an isomorphism by a result of Tu (combine the main result of \cite{MR1703305} applied to the amenable groupoid 
$Y\rtimes G$ with \cite{MR1966758}). We therefore obtain a splitting homomorphism $\sigma_{B}: K_*(B\rtimes_rG)\to K_*^{\top}(G;B)$ for the assembly map $\mu_B$ given by 
$$\sigma_B=(\iota_B)_*^{-1}\circ \mu_{C(Y)\otimes B}^{-1}\circ (\iota_B\rtimes_rG)_*.$$
In particular, we see that 
 $\gamma_B: =\mu_B\circ \sigma_B\in \End(K_*(B\rtimes_rG))$ is an idempotent with $\gamma_B(K_*(B\rtimes_rG))=\mu_B(K_*^{\top}(G;B))$
 for all $B$ and it follows that $G$ satisfies the Baum-Connes conjecture for $B$ if and only if $(1-\gamma_B)\cdot K_*(B\rtimes_rG)=\{0\}$.
 
Now, since $G$ is exact, we know that given a short exact sequence of $G$-algebras $0\to J\to B\to B/J\to 0$ 
all maps in diagram (\ref{eq-diagram}) are natural with respect to the various six-term exact sequences in $K$-theory and topological $K$-theory 
which are attached to the  sequence  $0\to J\to B\to B/J\to 0$  (see \cite[Proposition 4.1]{MR1836047} for the case of the assembly maps). 
It follows from this that we obtain a short 
exact sequence of the form
$$\begin{CD}
(1-\gamma_J)\cdot K_0(J\rtimes _rG) @>>> (1-\gamma_B)\cdot K_0(B\rtimes _rG) @>>> (1-\gamma_{B/J})\cdot K_0(B/J\rtimes _rG)\\
@A\partial AA @.         @VV\partial V\\
(1-\gamma_{B/J})\cdot K_1(B/J\rtimes _rG) @<<< (1-\gamma_B)\cdot K_1(B\rtimes _rG) @<<< (1-\gamma_J)\cdot K_1(J\rtimes _rG).
\end{CD}
$$
This is now a complete analogue of \cite[Lemma 3.2]{MR2010742}, and the proof of the proposition follows now exactly as the proof 
of \cite[Proposition 3.1]{MR2010742}.
\end{proof}

\begin{proof}[Proof of Theorem \ref{general-orbit}]
We first need to verify that for all $x\in X$ the action of $G$ on $A$ factors through an action of $G_x$ on the fibre $A_x$.
To see this observe that $A_x=J_x/I_x$ with $J_x=\cap\{P: P\in \varphi^{-1}(\overline{\{x\}}\smallsetminus \{x\})\}$ and $I_x=
\cap\{P: P\in \varphi^{-1}(\overline{\{x\}})\}$. Since $\varphi$ is $G$-equivariant and $\overline{\{x\}}$ and $\{x\}$ are $G_x$-invariant 
subsets of $X$, both ideals are  $G_x$-invariant. But this implies that the action of $G_x$ on $A$ factors through a well-defined action on $A_x$.

Let $\{U_\nu\}_\nu$ be a sequence of $G$-invariant open sets as in the theorem and for each index $\nu$ 
let $X_\nu:=U_{\nu+1}\setminus U_\nu$. Then $X_\nu\subseteq X$ is a locally closed $G$-invariant subset of $X$
and therefore the action of $G$ on $A$ factors through an action $\alpha^\nu$ of $G$ on $A_\nu:=A_{X_\nu}$ for 
each ordinal $\nu$. We first show that $G$ satisfies BC for $A_\nu$ for all $\nu$.

For this recall that by construction of $A_\nu$ we have $\Prim(A_\nu)\cong \varphi^{-1}(X_\nu)$. Let $Y_\nu:=X_\nu/G$ and let 
$\psi:\Prim(A_\nu)\to Y_\nu$ denote the composition of $\varphi:\Prim(A_\nu)\to X_\nu$ with the quotient map $X_\nu\to Y_\nu$.
Then $\psi$ is a continuous  open $G$-invariant map and by the assumptions made in the theorem it follows from Proposition 
\ref{prop-continuous-field} that $G$ satisfies BC for $A_\nu$ if we can show that $G$ satisfies BC for $A_y$ for all $y\in Y_\nu$, where here 
$A_y$ is the fibre over $y\in Y_\nu$ with respect to the map $\psi$.
By construction we have $\Prim(A_y)=\psi^{-1}(\{y\})=\varphi^{-1}(G\cdot x)$ if $y=G\cdot x$ and therefore $A_y=A_{G\cdot x}$,
where $A_{G\cdot x}$ is the subquotient of $A$ corresponding to the locally closed subset $G\cdot x=\{gx: g\in G\}$ of $X$ with respect to 
$\varphi$. It follows from item (2) in Glimm's theorem that $G/G_x$  is $G$-homeomorphic to $G\cdot x$ via $gG_x\mapsto gx$.
Via the identification $G/G_x\cong G\cdot x$ we may view $\varphi$ as a continuous and $G$-equivariant map 
$\varphi:\Prim(A_y)\to G/G_x$. It follows then from \cite[Theorem]{MR994776} that $A_y$ is isomorphic to 
the induced algebra $\Ind_{G_x}^G A_x$. Since, by assumption, $G_x$  satisfies BC for $A_x$ 
it follows then from \cite[Theorem 2.5]{MR1836047} that $G$ satisfies BC for 
$A_y=\Ind_{G_x}^GA_x$. 

In order to complete the proof, let $A_{U_\nu}$ denote the ideal of $A$ corresponding to the open subset $U_\nu\subseteq X$.
We now want to show that $G$ satisfies BC for $A_{U_\nu}$ for all $\nu$. We do this by transfinite induction.
For $\nu=1$ we have $A_{U_1}=A_0$ (if we put $U_0:=\emptyset$, and hence $X_0=U_1$), and hence $G$ satisifies BC for $A_{U_1}$ 
by the arguments in the above paragraph. 
Assume now that we know that $G$ satisfies BC for $A_{U_\nu}$ for some $\nu$. 
We have a short exact sequence
$$0\to A_{U_\nu}\to A_{U_{\nu+1}}\to A_{\nu}\to 0$$
of $G$-C*-algebras. By the above paragraph we then know that $G$ satisfies BC for $A_{U_\nu}$ and for $A_\nu$.
Since $G$ is exact, it follows then from \cite[Proposition 4.2]{MR994776} that $G$ satisfies BC for $A_{U_{\nu+1}}$ as well.
Finally, if $\nu$ is a limit ordinal, then $U_\nu=\cup_{\mu<\nu} U_\mu$ and therefore 
$A_{{U_\nu}}=\overline{\cup_{\mu<\nu} A_{U_\mu}}=\lim_{\mu<\nu} A_{U_\nu}$. It follows then from 
\cite[Proposition 2.6]{MR2010742} that $G$ satisfies BC for $A_{U_\nu}$. Since $X=U_{\nu_0}$ for some ordinal number $\nu_0$,
we may now conclude that $G$ satisfies BC for $A$.
\end{proof}

The spectrum $\hat{A}$ of a $C^*$-algebra $A$ is the set of all unitary equivalence classes of irreducible representations of $A$. If $A$ is a separable type I $C^*$-algebra, then $\hat{A}$ equipped with the Jacobson topology is a locally compact second countable almost Hausdorff space and the map
$[\pi]\mapsto \ker\pi\in \Prim(A)$ is a homeomorphism between $\widehat{A}$ and $\Prim(A)$.  
Thus in case of type I coefficient algebras $A$ one could choose $X=\widehat{A}$ and $\varphi:\Prim(A)\to X$ the inverse of the above
homeomorphism. In this case the fibre $A_{[\pi]}$ for some class $[\pi]\in \widehat{A}$ is isomorphic to $\mathcal K(H_\pi)$, the algebra 
of compact operators on the Hilbert space $H_\pi$ of $\pi$. To see this recall that  by the type I condition we have $\mathcal K(H_\pi)\subseteq \pi(A)$. Let $J=\pi^{-1}(\mathcal K(H_\pi))$. Then one easily checks that $J=\cap\{\ker\sigma: \sigma\in \overline{\{\pi\}}\smallsetminus\{\pi\}\}=J_{\blue{[\pi]}}$ and 
hence $A_{[\pi]}=J_{[\pi]}/I_{[\pi]}=J/\ker\pi\cong \mathcal K(H_\pi)$. 
Hence, as a corollary of Theorem \ref{general-orbit} we get

\begin{cor}\label{cor-typeI}
Suppose that $G$ is an exact group acting on the separable  type I C*-algebra $A$ such that the action on $\widehat{A}$ is smooth.
Suppose further that there is an ascending sequence $\{U_\nu\}_{\nu}$ of open $G$-invariant subsets of $\widehat{A}$ as in Glimm's theorem
such that all difference sets $(U_{\nu+1}\smallsetminus U_\nu)/G$ are either totally disconnected or the geometric realisations of finite dimensional
simplicial complexes. Then $G$ satisfies BC for $A$ if each stabiliser $G_\pi$ satisfies BC for $\mathcal K(H_\pi)$.
 \end{cor}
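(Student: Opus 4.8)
The plan is to apply Theorem~\ref{general-orbit} directly, with the choice $X=\widehat{A}$ set up in the discussion immediately preceding the corollary. First I would record the structural facts supplied by the type I hypothesis: equipped with the Jacobson topology, $X=\widehat{A}$ is a locally compact second countable almost Hausdorff space, and the map $[\pi]\mapsto\ker\pi$ is a $G$-equivariant homeomorphism $\widehat{A}\to\Prim(A)$. Taking $\varphi\colon\Prim(A)\to X$ to be its inverse, $\varphi$ is itself a $G$-equivariant homeomorphism, hence in particular continuous, open and $G$-equivariant, as Theorem~\ref{general-orbit} requires. Since the action of $G$ on $\widehat{A}$ is assumed smooth, $X$ is a smooth $G$-space in the sense of Definition~\ref{defn-smooth}. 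Thus the standing hypotheses of Theorem~\ref{general-orbit} are in force.

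Next I would match the two numbered conditions of Theorem~\ref{general-orbit} against the hypotheses of the corollary. The fibre of $A$ over a point $[\pi]\in X$ was identified in the preceding paragraph as $A_{[\pi]}\cong\mathcal{K}(H_\pi)$, using the type I inclusion $\mathcal{K}(H_\pi)\subseteq\pi(A)$ together with the computation $J_{[\pi]}=\pi^{-1}(\mathcal{K}(H_\pi))$ and $A_{[\pi]}=J_{[\pi]}/\ker\pi$. Under this identification, condition~(1) of Theorem~\ref{general-orbit}---that each stabiliser $G_x=G_\pi$ satisfy BC for the fibre $A_x=\mathcal{K}(H_\pi)$---is precisely the hypothesis of the corollary. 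Condition~(2)---that the Glimm sequence $\{U_\nu\}_\nu$ may be chosen so that every difference quotient $(U_{\nu+1}\smallsetminus U_\nu)/G$ is totally disconnected or the geometric realisation of a finite-dimensional simplicial complex---is assumed outright. With both conditions verified, Theorem~\ref{general-orbit} immediately gives that $G$ satisfies BC for $A$.

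The argument is therefore a clean specialisation of Theorem~\ref{general-orbit}, and I do not expect any genuine obstacle; the conceptual work has already been carried out in the proof of that theorem. The only point deserving care is the compatibility of the fibre identification with the group actions: one must confirm that the canonical action $\alpha_{[\pi]}$ of the stabiliser $G_\pi$ on the fibre $A_{[\pi]}$, produced by the first part of Theorem~\ref{general-orbit}, transports under the isomorphism $A_{[\pi]}\cong\mathcal{K}(H_\pi)$ to an honest $G_\pi$-action on $\mathcal{K}(H_\pi)$, so that the assumption ``$G_\pi$ satisfies BC for $\mathcal{K}(H_\pi)$'' is exactly the right one to invoke. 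Since the passage to the fibre and the induced action $\alpha_{[\pi]}$ are both canonical, and $\mathcal{K}(H_\pi)$ is determined up to the relevant equivariant isomorphism by $[\pi]$, this compatibility is automatic, and the proof concludes.
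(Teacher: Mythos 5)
Your proposal is correct and follows exactly the paper's own route: the paper derives this corollary directly from Theorem~\ref{general-orbit} by taking $X=\widehat{A}$, letting $\varphi\colon\Prim(A)\to X$ be the inverse of the homeomorphism $[\pi]\mapsto\ker\pi$, and using the fibre identification $A_{[\pi]}\cong\mathcal K(H_\pi)$ established in the paragraph preceding the statement. Your additional remark on the equivariance of the fibre identification is a sensible (and harmless) elaboration of what the paper leaves implicit.
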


If $G$ is a locally compact group, then we may identify its unitary dual $\hat{G}$ with the spectrum $\hat{C^*(G)}$ of the full group $C^*$-algebra of $G$ as topological spaces and we write $\widehat{G_r}$ for the subspace of $\widehat{G}$ corresponding to the quotient $C^*_r(G)$ of $C^*(G)$.
If $N$ is a closed normal subgroup of $G$, then we may decompose $C_r^*(G)$ as a reduced twisted crossed product 
$C_r^*(N)\rtimes_{(\alpha,\tau),r}(G,N)$ in the sense of Philip Green \cite{MR0493349}. It follows from \cite{MR1277761}
 that there exists an ordinary action of $\beta:G/N\to \Aut(B)$ on a C*-algebra $B$,
which is Morita equivalent to the decomposition twisted action $(\alpha,\tau)$  of $(G,N)$ on $C_r^*(N)$.

If $C_r^*(N)$ is type I and if $G_\pi\subseteq G$ denotes the 
stabiliser of $[\pi]\in \widehat{N_r}$ under the conjugation action, then $N\subseteq G_\pi$ and, similar to the case of ordinary actions as explained above, 
we obtain a twisted action of $(G_\pi, N)$ on $\mathcal K(H_\pi)$ for all $[\pi]\in \widehat{N_r}$. 
Since $(G,N)$-equivariant Morita equivalences induce $G/N$-equivariant homeomorphisms between the dual spaces
 and preserve the type I condition (e.g., see \cite[Propositions 5.11 and 5.12]{E-cross}), the given class 
 $[\pi]\in \widehat{N_r}$ corresponds to a class $[\tilde\pi]\in \widehat{B}$ such that $G_\pi/N=(G/N)_{\tilde\pi}$ and the corresponding action of 
 $(G/N)_{\tilde\pi}$ on $\mathcal K(H_{\tilde\pi})$ is Morita equivalent to the twisted action of $(G_\pi, N)$ on $\mathcal K(H_\pi)$.
 In \cite{MR1857079} the authors extended the the Baum-Connes conjecture to the case of twisted actions and showed 
 in particular that the conjecture is invariant under Morita equivalence of twisted actions. 
 Therefore $(G,N)$ satisfies BC for $C_r^*(N)$ if and only if $G/N$ satisfies BC for $B$ and $(G_\pi, N)$ satisfies (twisted) BC for 
 $\mathcal K(H_\pi)$ if and only if $(G/N)_{\tilde\pi}$ satisfies BC for $\mathcal K(H_{\tilde\pi})$.  Using this, we now get
 
 \begin{cor}[BC for group extensions]\label{orbit BC}
Suppose that $N$ has the Haagerup property and is a closed normal subgroup of a second countable locally compact group $G$ such that the following conditions are satisfied:
\begin{enumerate}
\item $G/N$ is an exact group.
\item $C_r^*(N)$ is type I and the action of $G/N$ on $\widehat{N_r}$ is smooth.
\item For all $\pi\in \widehat{N}_r$ the pair $(G_\pi, N)$ satisfies (twisted) BC for $\mathcal K(H_\pi)$.
\item There exists a  sequence of $G$-invariant open subsets $\{U_\nu\}_\nu$ of $\widehat{N}_r$ as in item (4) of Glimm's theorem such 
that $(U_{\nu+1}\smallsetminus U_\nu)/G$ is either totally disconnected or homeomorphic to the geometric realisation of a finite simplicial complex .
\end{enumerate}
Then $G$ satisfies the Baum-Connes conjecture for $\C$.
\end{cor}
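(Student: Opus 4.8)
The plan is to descend the Baum--Connes conjecture for $G$ with trivial coefficient $\C$ to the conjecture for the quotient group $G/N$ with coefficient in the separable C*-algebra $B$ carrying the honest action $\beta\colon G/N\to\Aut(B)$ that is Morita equivalent, as a decomposition twisted $(G,N)$-algebra, to $C_r^*(N)$ with its twisted action $(\alpha,\tau)$ coming from $C_r^*(G)=C_r^*(N)\rtimes_{(\alpha,\tau),r}(G,N)$. Once the problem has been transferred to the pair $(G/N,B)$, I would finish by applying Corollary~\ref{cor-typeI}.

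First I would carry out the reduction along the extension $1\to N\to G\to G/N\to 1$. Since $N$ has the Haagerup property, the Higson--Kasparov theorem \cite{MR1821144} shows that $N$ satisfies BC for \emph{every} coefficient algebra. The permanence results for group extensions in \cite{MR1836047, MR2100669} then provide a factorisation of the assembly map for $G$ into a partial assembly map in the $N$-direction followed by the (twisted) assembly map for the pair $(G,N)$ with coefficient $C_r^*(N)$; the Haagerup property of $N$ is exactly what forces the $N$-directional partial assembly map to be an isomorphism, so that $G$ satisfies BC for $\C$ if and only if $(G,N)$ satisfies the twisted conjecture for $C_r^*(N)$. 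By the Morita invariance of the twisted conjecture established in \cite{MR1857079} and recalled before the statement, this is in turn equivalent to $G/N$ satisfying BC for $B$. Thus it remains to prove that $G/N$ satisfies BC for $B$.

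To this end I would check that the action $\beta$ of $G/N$ on $B$ meets the hypotheses of Corollary~\ref{cor-typeI}. The group $G/N$ is exact by hypothesis~(1). As $C_r^*(N)$ is type I by hypothesis~(2), and $(G,N)$-equivariant Morita equivalence preserves the type I condition and induces a $G/N$-equivariant homeomorphism $\widehat{B}\cong\widehat{N_r}$, the algebra $B$ is type I and its spectrum is $G/N$-equivariantly identified with $\widehat{N_r}$. Under this identification the smoothness of the action on $\widehat{N_r}$ from hypothesis~(2) gives smoothness of the action on $\widehat{B}$, and the ascending sequence $\{U_\nu\}_\nu$ of $G$-invariant open sets with $(U_{\nu+1}\smallsetminus U_\nu)/G$ totally disconnected or the geometric realisation of a finite simplicial complex (hypothesis~(4)) transports to a Glimm sequence for $\widehat{B}$ satisfying the condition of Corollary~\ref{cor-typeI}. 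Finally, for $[\tilde\pi]\in\widehat{B}$ corresponding to $[\pi]\in\widehat{N_r}$ one has $(G/N)_{\tilde\pi}=G_\pi/N$, and by the same Morita invariance hypothesis~(3) becomes precisely the requirement that $(G/N)_{\tilde\pi}$ satisfy BC for $\mathcal K(H_{\tilde\pi})$. Corollary~\ref{cor-typeI} then yields BC for $B$, whence $G$ satisfies BC for $\C$.

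I expect the reduction step to be the main obstacle: one must run the extension-permanence machinery of \cite{MR1836047, MR2100669} with the \emph{twisted} coefficient $C_r^*(N)$ rather than a trivial one, and verify that the Haagerup property of $N$ indeed makes the $N$-directional partial assembly map an isomorphism for all the fibre coefficients that arise, so that the conjecture genuinely descends to $G/N$. The remaining work is a careful but routine translation of hypotheses~(1)--(4) across the Morita equivalence, followed by a direct invocation of Corollary~\ref{cor-typeI}.
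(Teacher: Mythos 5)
Your proposal is correct and follows essentially the same route as the paper: reduce via the Haagerup property of $N$ to the twisted conjecture for $(G,N)$ with coefficient $C_r^*(N)$ (this is exactly \cite[Theorem~2.1]{MR2100669}, whose proof is the partial-assembly-map argument you sketch), transfer across the Morita equivalence to the ordinary action of $G/N$ on $B$, verify that hypotheses (1)--(4) transport, and conclude with Corollary~\ref{cor-typeI}. The reduction step you flag as the main obstacle is in fact handled wholesale by that cited theorem, so no additional work is needed there.
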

\begin{proof}
Since $N$ is assumed to have the Haagerup property, it follows from \cite[Theorem 2.1]{MR2100669} that $G$ satisfies BC for $\C$ if and only if 
$(G,N)$ satisfies BC for $C_r^*(N)$ with respect to the twisted decomposition action $(\alpha,\tau)$. 
Let $\beta:G/N\to \Aut(B)$ be an ordinary action of $G/N$ which is Morita equivalent to the twisted action $(\alpha,\tau)$.
Then all assumptions of the corollary carry over to the action $\beta$ -- in particular we have that each stabiliser $(G/N)_{\tilde\pi}$ 
satisfies BC for $\mathcal K(H_{\tilde\pi})$ for all $\tilde\pi\in \widehat{B}$. Thus it follows from 
Corollary \ref{cor-typeI} that $G/N$ satisfies BC for $B$. But by Morita equivalence we  then conclude that $(G,N)$ satisfies BC 
for $C_r^*(N)$.
\end{proof}

\begin{rem}\label{rem-semidirect}
If in the above corollary the group $G$ is a semidirect product $N\rtimes G/N$ (i.e., the extension $1\to N\to G\to G/N\to 1$ is split)
then there exists an ordinary action $\beta: G/N\to\Aut(C_r^*(N))$ such that $C_r^*(G)\cong C_r^*(N)\rtimes_{\beta,r} G/N$.
In this case it is not necessary to use twisted actions and we can directly work with the action of $G/N$ on $C_r^*(N)$ in the  
above corollary. This will be the situation in most of our examples below.
\end{rem}

\section{BC for certain algebraic groups over local function fields}

In this section, we will use the orbit method for group extensions to verify the Baum-Connes conjecture for groups of $k$-rational points of some Levi-decomposable linear algebraic groups over a local function field $k$. 

We begin with a recapitulation of Kirillov's orbit method for unipotent groups $N$, which describes the unitary dual $\widehat{N}$ in terms of 
the co-adjoint orbits in $\mathfrak{n}^*$, where $\mathfrak{n}$ denotes a suitable version of a Lie algebra for $N$. Let $k$ be a (non-archimedean) local field of positive characteristic $p$ and let $\mathfrak{n}$ be any finite-dimensional nilpotent Lie algebra over $k$ with nilpotence length $l<p$. It is well-known from \cite[Theorem~1]{MR0194482} that $\mathfrak{n}$ admits a faithful finite-dimensional linear representation by nilpotent matrices, which we can simultaneously triangularize. Hence, the Campbell-Hausdorff formula implies that $N:=\exp(\mathfrak{n})$ is a linear algebraic unipotent group defined over $k$ and its Lie algebra is $\mathfrak{n}$. Here we write $N$ for its $k$-rational points $N(k)$ for simplicity.

The adjoint action $\operatorname{Ad}:N\ra \text{GL}(\mathfrak{n})$ is defined by
\begin{align*}
\operatorname{Ad}(n)(X):=\log(n\exp(X)n^{-1})\ \text{for $n\in N$ and $X\in \mathfrak{n}$},
\end{align*}
where $\log: N\ra \mathfrak{n}$ denotes the inverse of $\exp$.
Let $\mathfrak{n}^*$ denote the space of $k$-linear functionals of $\mathfrak{n}$ and let $\operatorname{Ad}^*:N\ra \text{GL}(\mathfrak{n}^*)$ be the co-adjoint action given by $\operatorname{Ad}^*(n)(f)=f\circ \operatorname{Ad}(n^{-1})$ for $n\in N$ and $f\in \mathfrak{n}^*$, then $\operatorname{Ad}^*$ is a $k$-rational action as $p>l$. Hence, all $\operatorname{Ad}^*(N)$-orbits are locally closed in the Hausdorff topology of $\mathfrak{n}^*$ (see \cite[Theorem~6.15 and Proposition~6.8 c)]{MR0425030} or Theorem~\ref{locally closed orbits}). It turns out that the quotient space $\mathfrak{n}^*/\operatorname{Ad}^*(N)$ is homeomorphic to the unitary dual $\hat{N}$ of $N$.

For a description of this homeomorphism, let $\Lambda_l$ denote the ring $\Z[\frac{1}{l!}]$ for $l$ the nilpotence length of $\mathfrak{n}$. Then $\mathfrak{n}$ becomes a Lie-algebra over the ring $\Lambda_l$
as considered in \cite{MR3012147}.
Now we fix a character $\varepsilon\in \hat{k}$ of order zero (see \cite[II. Definition~4]{MR0427267}). For $f\in \mathfrak{n}^*$, let $\mathfrak{m}_f$ be a maximal closed $\Lambda_l$-Lie subring of $\mathfrak{n}$ which is subordinate to $f$ in the sense that $\epsilon\circ f([\mathfrak{m}_f,\mathfrak{m}_f])=\{0\}$, where $[\cdot ,\cdot]$ denotes the Lie bracket. If $M_f:=\exp(\mathfrak{m}_f)\subseteq N$, then the map $m\mapsto \chi_f(m):=\varepsilon(f(\log(m)))$ defines a character of $M_f$. 
The following theorem then follows from \cite[Theorem~II]{MR0492059} or \cite[Theorem 6.14, Theorem 7.1 and Example 8.3]{MR3012147} (observe that 
in this situation we have $\mathfrak n^*\cong\widehat{\mathfrak n}$ via $f\mapsto \varepsilon\circ f$ by \cite[II. Theorem~3]{MR0427267}). 

\begin{thm}[cf. \cite{MR0492059, MR3012147}]\label{orbit for unipotent}
In the above situation the  
 induced representation $\pi_f:=\text{Ind}_{M_f}^N \chi_f$ is an irreducible unitary representation of $N$ and its equivalence class does not depend on the choice of $\mathfrak{m}_f$. 
The resulting map $\mathfrak{n}^*\ra \hat{N}$ given by $f\mapsto \pi_f$, is constant on $\operatorname{Ad}^*(N)$-orbits and it induces a homeomorphism between $\mathfrak{n}^*/\operatorname{Ad}^*(N)$ and $\hat{N}$. 
Moreover, the group $C^*$-algebra $C^*(N)$ is type $I$.
\end{thm}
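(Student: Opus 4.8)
The plan is to deduce all of the assertions from the orbit-method results already available in this setting, after checking that the positive-characteristic hypotheses place us inside their framework. The single point that makes everything work is the standing assumption $l<p$: the primes occurring as denominators in the Campbell-Hausdorff series up to nilpotence length $l$ are all strictly smaller than $p$ and hence invertible in $k$, so that $\exp$ and $\log$ are mutually inverse polynomial maps between $\mathfrak{n}$ and $N$, and $\mathfrak{n}$ carries the structure of a Lie ring over $\Lambda_l=\Z[\frac{1}{l!}]$ in the sense of \cite{MR3012147}. The first step is to record this and to observe that, consequently, $\mathfrak{m}_f$, the character $\chi_f(\exp X)=\varepsilon(f(X))$ on $M_f=\exp(\mathfrak{m}_f)$, and the induced representation $\pi_f=\text{Ind}_{M_f}^N\chi_f$ are all well defined over $\Lambda_l$.

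With the $\Lambda_l$-structure in hand, I would invoke \cite[Theorem~II]{MR0492059} (equivalently \cite[Theorem 6.14 and Theorem 7.1]{MR3012147}) to obtain the three algebraic assertions simultaneously: that $\pi_f$ is irreducible, that its unitary equivalence class is independent of the choice of maximal subordinate subring $\mathfrak{m}_f$, and that $\pi_f\cong\pi_{f'}$ precisely when $f$ and $f'$ lie in the same $\operatorname{Ad}^*(N)$-orbit, together with surjectivity of $f\mapsto[\pi_f]$ onto $\hat{N}$; in particular the map descends to a bijection $\mathfrak{n}^*/\operatorname{Ad}^*(N)\to\hat{N}$. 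To align the present formulation with the references I would use the identification $\mathfrak{n}^*\cong\hat{\mathfrak{n}}$, $f\mapsto\varepsilon\circ f$, furnished by the fixed order-zero character $\varepsilon$ via \cite[II. Theorem~3]{MR0427267}, so that statements phrased in terms of characters of $\mathfrak{n}$ translate into statements about linear functionals.

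For the topological content I would show the bijection is a homeomorphism by combining the explicit parametrisation with Glimm's theorem (Theorem~\ref{glimm}). Since $p>l$ makes $\operatorname{Ad}^*$ a $k$-rational action, every co-adjoint orbit is locally closed in the Hausdorff, locally compact, second countable space $\mathfrak{n}^*$ (by \cite[Theorem~6.15 and Proposition~6.8 c)]{MR0425030}); hence the $N$-action is smooth and $\mathfrak{n}^*/\operatorname{Ad}^*(N)$ is almost Hausdorff, while continuity of $f\mapsto[\pi_f]$ together with continuity of its inverse follows as in \cite[Example 8.3]{MR3012147}. Finally, the type~I property of $C^*(N)$ emerges from the same orbit picture: the dual $\hat{N}$ is homeomorphic to the almost Hausdorff orbit space and in particular is $T_0$, which for a second countable locally compact group is equivalent to being type~I, exactly as established for the present class of groups in \cite{MR3012147}.

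The step I expect to be the main obstacle is the verification, carried out in the first step above, that no division by $p$ is concealed anywhere in the construction or in the Mackey-machine arguments for irreducibility and exhaustion of $\hat{N}$ — in other words, that the entire characteristic-zero orbit method survives intact over $\Lambda_l$ once $l<p$ is assumed. This is precisely the content supplied by \cite{MR0492059, MR3012147}, so that after this reduction the remaining steps are formal applications of the cited results.
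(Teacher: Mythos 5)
Your proposal is correct and takes essentially the same approach as the paper: the paper offers no independent argument, but deduces the theorem directly from \cite[Theorem~II]{MR0492059} and \cite[Theorem 6.14, Theorem 7.1 and Example 8.3]{MR3012147}, observing that $\mathfrak{n}^*\cong\widehat{\mathfrak{n}}$ via $f\mapsto\varepsilon\circ f$ by \cite[II. Theorem~3]{MR0427267}, which is exactly your route. Your extra verifications (the $\Lambda_l$-structure guaranteed by $l<p$, and the topological remarks via smoothness of the co-adjoint action) are consistent elaborations of the same citations rather than a different proof.
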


\begin{rem}\label{normal orbit}
Let $N=\exp{\mathfrak n}$ as in the above discussion. Let $R$ be any locally compact (not necessarily algebraic) group acting continuously on $N$ 
and let $G:=N\rtimes R$. As before, we define the adjoint action $\operatorname{Ad}$ of $G$ on $\mathfrak{n}$ by the formula
\begin{align*}
\operatorname{Ad}(g)(X):=\log(g\exp(X)g^{-1})\ \text{for $g\in G$ and $X\in \mathfrak{n}$}.
\end{align*}
Let $\operatorname{Ad}^*:G\ra \text{GL}(\mathfrak{n}^*)$ be its co-adjoint action given by $\operatorname{Ad}^*(g)(f)=f\circ \operatorname{Ad}(g^{-1})$ for $g\in G$ and $f\in \mathfrak{n}^*$.
Let $\mathfrak{m}_f$ be a maximal closed $\Lambda_l$-Lie subring of $\mathfrak n$ subordinate to $f$.
 It is straightforward to see that $\operatorname{Ad}(g)(\mathfrak{m}_f)$ is a 
maximal $\Lambda_l$-Lie subring of $\mathfrak n$ subordinate to $\operatorname{Ad}^*(g)(f)$, hence we may choose
$\mathfrak{m}_{\operatorname{Ad}^*(g)(f)}=\operatorname{Ad}(g)(\mathfrak{m}_f)$ for $g\in G$ and $f\in \mathfrak{n}^*$.  Hence,
\begin{align*}
M_{\operatorname{Ad}^*(g)(f)}=\exp(\mathfrak{m}_{\operatorname{Ad}^*(g)(f)})=\exp(\operatorname{Ad}(g)(\mathfrak{m}_f))=gM_fg^{-1}.
\end{align*}
Let $\chi^g_f:gM_fg^{-1}\ra \mathbb{T}$ be the character of $gM_fg^{-1}$ given by $\chi^g_f(gm g^{-1}):=\chi_f(m)$, $m\in M_f$. Then
a short computation shows that $\chi_f^g=\chi_{\operatorname{Ad}^*(g)(f)}$, which implies that
\begin{align*}
\pi_{\operatorname{Ad}^*(g)(f)}:=\text{Ind}^N_{M_{\operatorname{Ad}^*(g)(f)}}\chi_{\operatorname{Ad}^*(g)(f)}=\text{Ind}^N_{gM_fg^{-1}} \chi_f^g\cong (\text{Ind}^N_{M_f} \chi_f)^g=\pi_f^g,
\end{align*}
where $\pi_f^g(n):=\pi_f(g^{-1}ng)$ denotes the $g$-conjugate of $\pi_f$ in $\widehat{N}$ (see e.g. \cite[Proposition~1.12]{MR1066810}).
If $g=(n,r)\in N\rtimes R$ and $f\in \mathfrak{n}^*$, then 
\begin{align*}
\pi_{\operatorname{Ad}^*(g)(f)}\cong \pi_f^g=(\pi_f^r)^n\cong \pi_f^r.
\end{align*}
It follows that the Kirillov-orbit homeomorphism $\mathfrak{n}^*/\operatorname{Ad}^*(N) \ra \hat{N}$ is $\operatorname{Ad}^*(R)-R$ equivariant if we 
define the action of $R$ on $\widehat{N}$ by $r\cdot \pi:=\pi^r$. In particular, the Kirillov map  induces a homeomorphism $\mathfrak{n}^*/\operatorname{Ad}^*(G)=\left(\mathfrak{n}^*/\operatorname{Ad}^*(N)\right)/ \operatorname{Ad}^*(R) \cong \hat{N}/R.$
\end{rem}

 \begin{rem}
Let $k$ be a local field of positive characteristic $p$. Then we have $(x+y)^p=x^p+y^p$ for all $x,y\in k$. The fake Heisenberg group is defined as follows:
$$
N:=\left\{\begin{pmatrix}
1&a&b\\
0&1&a^p\\
0&0&1\\
\end{pmatrix}: a,b\in k\right\}_.$$
It is a non-abelian $k$-split connected unipotent linear algebraic group with nilpotence length two. Consider its Lie ring
$$
\mathfrak{n}:=\log(N)=\left\{\begin{pmatrix}
0&x&y\\
0&0&x^p\\
0&0&0\\
\end{pmatrix}: x,y\in k\right\}_.$$
Since $k$ is not perfect, $\mathfrak{n}$ is not stable under scalar multiplication. In particular, $\mathfrak{n}$ is not a vector subspace of $M_3(k)$ and is different from Lie($N$), the Lie algebra associated to $N$ in the sense of algebraic geometry. Hence, $\mathfrak{n}^*$ does not make sense as set of linear functionals 
from $\mathfrak n$ to $k$. But the orbit method still makes sense if we replace $\mathfrak{n}^*$ by the Pontryagin dual $\hat{\mathfrak{n}}$ of the additive group $\mathfrak{n}$ (see \cite[Example~8.2 and Example~8.3]{MR3012147}). However, we do not know whether all $\operatorname{Ad}^*(N)$-orbits are locally closed in the Hausdorff topology of $\hat{\mathfrak{n}}$. 
\end{rem}

\begin{cor}\label{orbit for BC}
Let $k$ be a non-archimedean local field of positive characteristic $p$ and let $\mathfrak{n}$ be a finite-dimensional nilpotent $k$-Lie algebra with nilpotence length $l<p$. Let $N:=\exp(\mathfrak{n})$ be the corresponding unipotent $k$-group and let $R$ be a locally compact second countable exact group. 
Suppose that $R$ acts continuously on $N$ such that all $R$-orbits in $\hat{N}$ are locally closed
and  that for all $[\pi]\in \hat{N}$ the stabilizers  $R_\pi=\{ g\in R: g\cdot[\pi]=[\pi]\}$ satisfy BC  for $\mathcal{K}(H_\pi)$.

Then the semidirect product group $G=N\rtimes R$ satisfies BC for $\C$.
\end{cor}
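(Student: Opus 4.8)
The plan is to view $G=N\rtimes R$ as the split extension $1\to N\to G\to R\to 1$ and to deduce the assertion from Corollary~\ref{orbit BC}; since the extension splits, Remark~\ref{rem-semidirect} lets me work throughout with the ordinary action of $R$ on $C_r^*(N)$ and avoid twisted crossed products. First I would dispose of the easy hypotheses. The group $N=\exp(\mathfrak n)$ is nilpotent, hence amenable, and second countable, so it has the Haagerup property and satisfies $C^*(N)=C_r^*(N)$; in particular $\widehat{N_r}=\widehat N$. By Theorem~\ref{orbit for unipotent} the algebra $C^*(N)$ is type~I, and the assumption that all $R$-orbits in $\widehat N$ are locally closed, combined with Glimm's Theorem~\ref{glimm}, shows that the $R$-action on $\widehat{N_r}=\widehat N$ is smooth in the sense of Definition~\ref{defn-smooth}. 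Exactness of $R=G/N$ is part of the hypotheses. Finally, for $[\pi]\in\widehat{N_r}$ the conjugation stabiliser in $G$ is $G_\pi=N\rtimes R_\pi$ (as $N$ acts trivially on $\widehat N$), and Remark~\ref{rem-semidirect} reduces the required twisted Baum--Connes statement for $(G_\pi,N)$ on $\mathcal K(H_\pi)$ to the assumed statement that $R_\pi$ satisfies BC for $\mathcal K(H_\pi)$.

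It remains to produce the filtration required in condition~(4) of Corollary~\ref{orbit BC}, i.e.\ an ascending sequence of $R$-invariant open subsets $\{U_\nu\}_\nu$ of $\widehat N$ as in Glimm's theorem whose successive difference quotients $(U_{\nu+1}\smallsetminus U_\nu)/R$ are totally disconnected (the simplicial alternative will not be needed). Here I would invoke the Kirillov picture: by Theorem~\ref{orbit for unipotent} the map $f\mapsto\pi_f$ is a homeomorphism $\mathfrak n^*/\operatorname{Ad}^*(N)\xrightarrow{\ \cong\ }\widehat N$, which by Remark~\ref{normal orbit} is equivariant for the $R$-actions and hence identifies $\widehat N/R$ with $\mathfrak n^*/\operatorname{Ad}^*(G)$. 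Under this identification the local-closedness hypothesis says precisely that the $\operatorname{Ad}^*(G)$-action on $\mathfrak n^*$ is smooth, so applying Glimm's Theorem~\ref{glimm}(4) to $\mathfrak n^*$ yields a sequence $\{V_\nu\}_\nu$ of $\operatorname{Ad}^*(G)$-invariant open sets whose difference quotients $(V_{\nu+1}\smallsetminus V_\nu)/\operatorname{Ad}^*(G)$ are Hausdorff; transporting the $V_\nu$ back through the homeomorphism produces the desired $U_\nu$ with $(U_{\nu+1}\smallsetminus U_\nu)/R\cong(V_{\nu+1}\smallsetminus V_\nu)/\operatorname{Ad}^*(G)$.

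The main obstacle is to check that these Hausdorff difference quotients are genuinely totally disconnected. The essential input is that $\mathfrak n^*$ is a finite-dimensional vector space over the non-archimedean local field $k$ and is therefore totally disconnected (indeed zero-dimensional), so each locally closed piece $V_{\nu+1}\smallsetminus V_\nu\subseteq\mathfrak n^*$ is totally disconnected as well; the difficulty is that passing to an orbit space can a priori destroy this. I would handle it as follows. Every $\operatorname{Ad}^*(G)$-orbit is a subspace of the totally disconnected space $\mathfrak n^*$, hence totally disconnected, so the identity component of $G$ acts trivially and the action factors through the totally disconnected locally compact group $\Gamma:=G/G_0$. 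By van Dantzig's theorem $\Gamma$ admits a compact open subgroup $K$; since orbit maps of compact groups are both open and proper, the quotient of a zero-dimensional space by $K$ remains zero-dimensional, and the residual action of the discrete coset space $\Gamma/K$ can then be used to conclude that each Hausdorff Glimm quotient $(V_{\nu+1}\smallsetminus V_\nu)/\Gamma$ is zero-dimensional, hence totally disconnected. Granting this, all four hypotheses of Corollary~\ref{orbit BC} are verified and $G=N\rtimes R$ satisfies BC for $\C$.
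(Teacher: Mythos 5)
Your first two paragraphs reproduce the paper's own reduction: Remark~\ref{rem-semidirect} to avoid twisted actions, amenability (hence Haagerup property and $C^*(N)=C^*_r(N)$) for $N$, Theorem~\ref{orbit for unipotent} for the type~I property and the Kirillov homeomorphism, Glimm's Theorem~\ref{glimm} for smoothness, and Remark~\ref{normal orbit} to identify the Glimm difference quotients with subquotients of $\mathfrak n^*/\operatorname{Ad}^*(G)$. (Whether one runs Glimm on $\widehat N$ and pulls the pieces back to $\mathfrak n^*$, as the paper does, or runs it on $\mathfrak n^*$ and pushes the filtration forward, as you do, is immaterial, since the quotient maps involved are open and all sets in sight are saturated.) The problem is in your final paragraph, i.e.\ in exactly the step you correctly identify as the crux.

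The sentence ``the residual action of the discrete coset space $\Gamma/K$ can then be used to conclude'' is not an argument. The compact open subgroup $K$ given by van Dantzig need not be normal in $\Gamma$, so $\Gamma/K$ is not a group and does not act on $Y/K$ (where $Y=V_{\nu+1}\smallsetminus V_\nu$); and even if you rephrase the passage from $Y/K$ to $Y/\Gamma$ as a quotient by a countable identification, you give no reason why zero-dimensionality survives it --- which is precisely the difficulty (``passing to an orbit space can a priori destroy this'') that your detour through $G_0$ and van Dantzig was meant to resolve. So the key step is, in the end, asserted rather than proved.

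The missing fact, which both closes this gap and makes the entire detour unnecessary, is the following standard observation, and it is what the paper's proof uses. For any continuous action of a topological group $H$ on a space $Y$, the orbit map $q:Y\to Y/H$ is open, since the saturation $\bigcup_{h\in H}hU$ of an open set $U$ is open. Now if $Y$ is locally compact, Hausdorff and totally disconnected (equivalently, zero-dimensional, so the compact open sets form a basis) and if $Y/H$ is Hausdorff, then for every compact open $U\subseteq Y$ the image $q(U)$ is open, compact, and hence closed in the Hausdorff space $Y/H$; since images of a basis under an open surjection form a basis, $Y/H$ has a basis of clopen sets and is therefore totally disconnected. Applying this directly with $Y=p^{-1}(X_\nu)\subseteq\mathfrak n^*$ (a locally closed, hence locally compact Hausdorff totally disconnected subset of $\mathfrak n^*\cong k^{\dim\mathfrak n}$), $H$ acting through $\operatorname{Ad}^*(G)$, and $X_\nu=(U_{\nu+1}\smallsetminus U_\nu)/G$ Hausdorff by Glimm, yields the total disconnectedness in one stroke; this is the paper's argument. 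Your intermediate claims ($G_0$ acts trivially because orbits are totally disconnected, van Dantzig, $Y/K$ zero-dimensional) are all true, but each of them already rests on this same open-map/compact-open-basis principle, and once that principle is stated there is nothing left for them to do.
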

\begin{proof}
Since nilpotent groups are amenable, we have $C^*(N)=C_r^*(N)$.
According to Theorem~\ref{orbit for unipotent}, 
$C^*(N)$ is type I and the Kirillov-orbit map $\mathfrak{n}^*/\operatorname{Ad}^*(N) \ra \widehat{N}$ is a homeomorphism.
Since all $R$-orbits in $\hat{N}$ are locally closed, it follows from Glimm's Theorem that there exists a sequence of $G$-invariant open subsets $\{U_\nu\}_\nu$ of $\hat{N}$ such that $(U_{\nu+1}\backslash U_\nu)/G$ is Hausdorff. By Corollary~\ref{orbit BC} it suffices to show that the Hausdorff space $X_\nu:=(U_{\nu+1}\backslash U_\nu)/G$ is totally disconnected for every $\nu$. 
 By Remark~\ref{normal orbit} we may identify $\hat{N}/G\cong \widehat{N}/R$ with $\mathfrak{n}^*/\operatorname{Ad}^*(G)$ as topological spaces. If $p:\mathfrak{n}^*\ra \mathfrak{n}^*/\operatorname{Ad}^*(G)$ denotes the orbit map, then $p^{-1}(X_\nu)$ is locally closed in the locally compact Hausdorff totally disconnected space $\mathfrak{n}^*$. Hence, $p^{-1}(X_\nu)$ is a locally compact Hausdorff totally disconnected $\operatorname{Ad}^*(G)$-invariant space. Hence, $p$ restricts to the orbit map from $p^{-1}(X_\nu)$ onto $X_\nu=p^{-1}(X_\nu)/\operatorname{Ad}^*(G)$. We conclude that $X_\nu$ is totally disconnected.
\end{proof}

Recall that the group $G=\mathbf{G}(k)$ of $k$-rational points of a linear algebraic group $\mathbf G$ is isomorphic to a Zariski closed (in particular, Hausdorff closed) subgroup of $\GL_n(k)$ for some $n\in \N$. The \emph{unipotent radical} $\mathbf N$ of $\mathbf G$ is the largest connected unipotent normal subgroup of $\mathbf{G}(\bar{k})$, where $\bar{k}$ is an algebraic closure of $k$. A linear algebraic group is called \emph{reductive} if its unipotent radical is trivial. One says that a linear algebraic group $\mathbf G$ defined over a field $k$ has a \emph{Levi decomposition over $k$} if its unipotent radical $\mathbf N$ is defined over $k$ and there exists a Zariski closed reductive $k$-subgroup $\mathbf R$, called a Levi factor, of $\mathbf G$ such that the product mapping $\mathbf N\rtimes \mathbf R \ra \mathbf G$ given by $(x,y)\mapsto xy$ is a $k$-isomorphism of algebraic groups. In particular, $G\cong N\rtimes R$ if
$N=\mathbf{N}(k)$ and $R=\mathbf R(k)$ denote the $k$-rational points of $\mathbf N$ and $\mathbf R$, respectively. It is well-known that linear algebraic groups in characteristic zero always have a Levi decomposition (see e.g. \cite[VIII, Theorem~4.3]{MR620024}). When the field has positive characteristic, $\mathbf G$ need not have a Levi factor and the unipotent radical of $\mathbf G$ is in general not defined over the field (see \cite{MR0218365, MR2753264, MR3362817} for more details about Levi decompositions over fields of positive characteristic). Therefore, we will only consider such linear algebraic groups which have a Levi decomposition over a local function field.

The orbit method for the Baum-Connes conjecture for the group $G=\mathbf G(k)$ of $k$-rational points of 
a linear algebraic group $\mathbf G$ over a local function field $k$ can be simplified: since $\GL_n(k)$ admits a solvable cocompact closed subgroup, it follows from \cite[Theorem~7]{MR1725812} that $\GL_n(k)$ is an exact group. Hence, the closed subgroup $G\subseteq \GL_n(k)$ is also exact by \cite[Theorem~4.1]{MR1725812}. 
Moreover, we have the following well-known theorem about locally closed orbits:

\begin{thm}[{\cite[Theorem~6.15 and Proposition~6.8 c)]{MR0425030}}]\label{locally closed orbits}
Let $k$ be a non-archimedean local field and $\mathbf X$ be an algebraic $k$-variety. If $\mathbf G$ is a linear algebraic $k$-group, which acts $k$-rationally on $\mathbf X$, then the induced action of $G=\mathbf G(k)$ on $X=\mathbf X(k)$ is continuous and all $G$-orbits in $X$ are locally closed.
\end{thm}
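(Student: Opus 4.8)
The plan is to separate the topological ``softness'' of the assertion from the one genuinely characteristic-sensitive step. First I would record the ambient structure: since $k$ is a non-archimedean local field, both $G=\mathbf G(k)$ and $X=\mathbf X(k)$ are second countable, locally compact, Hausdorff and totally disconnected in their natural ($k$-analytic) topology, and $G$ is in particular $\sigma$-compact. Continuity of the action is then immediate, since the action is defined by a $k$-morphism $\mathbf G\times\mathbf X\to\mathbf X$ and any $k$-morphism of $k$-varieties induces a continuous map on $k$-points for the Hausdorff topology; hence $G\times X\to X$ is continuous.

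For the local closedness of orbits, fix $x\in X$ and pass to the algebraic orbit. It is classical, and valid in every characteristic, that $\mathbf G\cdot x$ is a locally closed $k$-subvariety of $\mathbf X$, i.e.\ Zariski-open in its Zariski closure $\mathbf C:=\overline{\mathbf G\cdot x}$, and that it is defined over $k$ because $x$ is $k$-rational and $\mathbf G$ is a $k$-group. Taking $k$-points turns Zariski-closed sets into Hausdorff-closed sets and Zariski-open sets into Hausdorff-open sets, so $\mathcal O:=(\mathbf G\cdot x)(k)$ is locally closed in $X$; being locally closed in a locally compact Hausdorff space, $\mathcal O$ is itself locally compact Hausdorff, hence a Baire space. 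Since local closedness is transitive, it now suffices to prove that the single $G$-orbit $G\cdot x$ is locally closed in $\mathcal O$.

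Here the clean case is when the orbit map $\pi\colon\mathbf G\to\mathbf G\cdot x$ is separable, which is automatic if $\operatorname{char}k=0$: then $d\pi_e$ is surjective, $\pi$ is a submersion, and the $k$-analytic implicit function theorem makes $\pi_k\colon G\to\mathcal O$ an open map, so $G\cdot x=\pi_k(G)$ is open in $\mathcal O$ and we are done. The main obstacle is precisely that over a local function field the orbit map may be \emph{inseparable}, so that $d\pi_e$ need not be surjective and the submersion argument collapses. To circumvent this I would argue by Baire category instead. The set $\mathcal O$ decomposes into the $G$-orbits of the various $y\in\mathcal O$, and these orbits are parametrised by the kernel of $H^1(k,\mathbf G_x)\to H^1(k,\mathbf G)$; by finiteness of the Galois cohomology of linear algebraic groups over local fields this kernel is finite, so $\mathcal O$ is a \emph{finite} disjoint union of $G$-orbits.

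Finally, each orbit $G\cdot y=\bigcup_n K_n\cdot y$, where $G=\bigcup_n K_n$ with $K_n$ compact, is a countable union of compact, hence closed, subsets of $\mathcal O$. Because $\mathcal O$ is Baire and is covered by finitely many such $F_\sigma$-orbits, at least one compact piece $K_n\cdot y$ has nonempty interior; translating by $G$ then shows that the corresponding orbit $G\cdot y$ is open in $\mathcal O$. If this open orbit is $G\cdot x$ we are finished; otherwise I remove it and repeat the argument on the closed, and again locally compact Hausdorff (hence Baire), subspace $\mathcal O\setminus G\cdot y$, which carries one fewer orbit. This induction on the finitely many orbits exhibits $G\cdot x$ as locally closed in $\mathcal O$, being open inside a closed subspace of $\mathcal O$, and combined with the local closedness of $\mathcal O$ in $X$ it yields that $G\cdot x$ is locally closed in $X$, as required.
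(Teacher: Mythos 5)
Your reduction to showing that $G\cdot x$ is locally closed in $\mathcal O:=(\mathbf G\cdot x)(k)$ is fine, and so is your characteristic-zero aside, but the pivotal step --- ``$\mathcal O$ is a \emph{finite} disjoint union of $G$-orbits, by finiteness of the Galois cohomology of linear algebraic groups over local fields'' --- is false in exactly the setting this paper needs, namely local fields of positive characteristic. Two separate things go wrong. First, the identification of the orbit set with $\ker\bigl(H^1(k,\mathbf G_x)\to H^1(k,\mathbf G)\bigr)$ is only valid for the \emph{scheme-theoretic} stabilizer and flat (fppf) cohomology; when the orbit map is inseparable --- the very obstacle you set out to circumvent --- that stabilizer is non-smooth, and Galois cohomology of its reduced part gives the wrong answer. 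Second, the finiteness you invoke is Borel--Serre's theorem, which is a characteristic-zero result: over a local function field, $H^1$ can be infinite both for infinitesimal group schemes (e.g.\ $H^1_{\mathrm{fppf}}(k,\alpha_p)\cong k/k^p$) and even for smooth connected (wound) unipotent groups --- and unipotent groups are unavoidable here, since the paper applies the theorem to co-adjoint actions of $\mathbf N$ and $\mathbf N\rtimes\mathbf R$.

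A concrete example kills the argument at both steps. Let $k=\mathbb F_q((t))$ and let $\mathbf G=\mathbb G_a$ act on $\mathbf X=\mathbb A^1$ by $s\cdot x=x+s^p$. The algebraic orbit of $0$ is all of $\mathbb A^1$ (the orbit map is the Frobenius $s\mapsto s^p$, surjective on $\bar k$-points), so $\mathcal O=k$; but the $G$-orbits in $\mathcal O$ are the cosets of $k^p=\mathbb F_q((t^p))$, of which there are uncountably many. Moreover $k^p$ is a closed subgroup with empty interior in $k$, so \emph{no} orbit is open in $\mathcal O$: the Baire-category conclusion fails outright, and no induction can start. (Note also that the reduced stabilizer is trivial here, so your Galois-cohomology count would predict a single orbit.) The theorem itself survives in this example --- each coset of $k^p$ is closed, hence locally closed --- but for a reason your argument does not capture: purely inseparable morphisms such as Frobenius are closed embeddings on $k$-points. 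This is consistent with how the statement is treated in the paper: it is not proved there but quoted from Bernstein--Zelevinsky, whose proof analyzes images of $k$-morphisms (separating the separable and purely inseparable parts) rather than counting orbits.
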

The following theorem is now almost an immediate consequence of Corollary~\ref{orbit for BC}:
\begin{thm}\label{algebraic orbit method}
Let $\mathbf G$ be a linear algebraic group with a Levi decomposition $\mathbf G=\mathbf N\rtimes \mathbf R$ over a non-archimedean local field $k$ of positive characteristic $p$. Let $N=\mathbf N(k)$, $R=\mathbf R(k)$ and $G=N\rtimes R$ ($=\mathbf G(k)$).
Assume that $N=\exp(\mathfrak{n})$ for a finite-dimensional nilpotent $k$-Lie algebra $\mathfrak{n}$ with nilpotence length $l<p$.

Then $G=N\rtimes R$ satisfies BC for $\C$, if all stabilizers $R_\pi$  satisfy BC for $\mathcal{K}(H_\pi)$ for all $[\pi]\in \hat{N}$.
\end{thm}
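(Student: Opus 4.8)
The plan is to deduce the statement from Corollary~\ref{orbit for BC}. Writing $N=\mathbf N(k)$, $R=\mathbf R(k)$ and $G=N\rtimes R$, the Levi decomposition and the hypothesis $N=\exp(\mathfrak n)$ with $l<p$ put us exactly in the unipotent setting governed by Theorem~\ref{orbit for unipotent}. The stabiliser hypothesis on the $R_\pi$ is literally the one appearing in Corollary~\ref{orbit for BC}, so the entire task reduces to verifying the three standing assumptions there: that $R$ is a locally compact second countable exact group, that it acts continuously on $N$, and that all $R$-orbits in $\widehat N$ are locally closed.

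First I would dispose of the two easy points. Since $\mathbf R$ is a linear algebraic $k$-group, $R=\mathbf R(k)$ is a Hausdorff-closed subgroup of some $\GL_n(k)$; as $k$ is a local field, $\GL_n(k)$ is locally compact and second countable, hence so is $R$. By the discussion preceding the theorem $\GL_n(k)$ is exact, and therefore its closed subgroup $R$ is exact by \cite[Theorem~4.1]{MR1725812}. Continuity of the $R$-action on $N$ is automatic, since it is the restriction of the conjugation action inside the topological group $G=N\rtimes R$.

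The heart of the matter is the local closedness of the $R$-orbits in $\widehat N$, and for this I would pass through the Kirillov picture. Because $l<p$, the coadjoint action $\operatorname{Ad}^*\colon \mathbf G\to \GL(\mathfrak n^*)$ is a $k$-rational representation of $\mathbf G$ on the affine space $\mathfrak n^*$, so Theorem~\ref{locally closed orbits} guarantees that every orbit $\operatorname{Ad}^*(G)\cdot f$ is locally closed in $\mathfrak n^*$. By Theorem~\ref{orbit for unipotent} the Kirillov map induces a homeomorphism $\mathfrak n^*/\operatorname{Ad}^*(N)\cong\widehat N$, so composing with the (open, continuous) orbit quotient map we obtain a continuous open surjection $q\colon\mathfrak n^*\to\widehat N$, $f\mapsto\pi_f$, whose fibres are the $\operatorname{Ad}^*(N)$-orbits. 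By Remark~\ref{normal orbit} this $q$ is $R$-equivariant, and a short computation using that $\operatorname{Ad}^*$ is a homomorphism gives $q^{-1}(R\cdot\pi_f)=\operatorname{Ad}^*(N)\cdot\operatorname{Ad}^*(R)\cdot f=\operatorname{Ad}^*(G)\cdot f$. Thus the preimage under $q$ of an $R$-orbit is exactly a (locally closed) $G$-orbit in $\mathfrak n^*$, and it remains only to transport local closedness across $q$.

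For this final transfer I would use the general fact that an open continuous surjection sets up an inclusion-preserving bijection between open (resp.\ closed) subsets of the target and $\operatorname{Ad}^*(N)$-saturated open (resp.\ closed) subsets of the source. Setting $S=R\cdot\pi_f$ and noting that closures of saturated sets are saturated (the $\operatorname{Ad}^*(N)$-action is by homeomorphisms), one checks $q^{-1}(\overline S)=\overline{q^{-1}(S)}$ and that the restriction $q\colon q^{-1}(\overline S)\to \overline S$ is again an open continuous surjection. Since $q^{-1}(S)=\operatorname{Ad}^*(G)\cdot f$ is locally closed, i.e.\ open in $\overline{q^{-1}(S)}$, and is saturated, its image $S$ is open in $\overline S$; that is, $S$ is locally closed in $\widehat N$. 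All hypotheses of Corollary~\ref{orbit for BC} being verified, the theorem follows. I expect the only genuinely delicate point to be this purely topological transfer through $q$ — keeping track of which sets are $\operatorname{Ad}^*(N)$-saturated so that the open-surjection dictionary applies — whereas the $k$-rationality of $\operatorname{Ad}^*$ and the exactness of $R$ are routine given the results already assembled above.
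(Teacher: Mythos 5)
Your proposal is correct and is essentially the paper's own proof: both reduce to Corollary~\ref{orbit for BC} (exactness and continuity of the action being routine) and derive local closedness of the $R$-orbits in $\hat{N}$ from the $k$-rationality of the coadjoint action of $G$ on $\mathfrak{n}^*$ (Theorem~\ref{locally closed orbits}) combined with the equivariant Kirillov homeomorphism of Remark~\ref{normal orbit}. The only difference is the final transfer step: you push local closedness directly through the open continuous surjection $q\colon\mathfrak{n}^*\to\hat{N}$ by an elementary saturation argument, whereas the paper applies Glimm's theorem twice, converting locally closed $\operatorname{Ad}^*(G)$-orbits into an almost Hausdorff quotient $\mathfrak{n}^*/\operatorname{Ad}^*(G)\cong\hat{N}/R$ and then back into locally closed $R$-orbits.
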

\begin{proof}
By Corollary~\ref{orbit for BC} it suffices to show that $R$ acts continuously on $N$ and all $R$-orbits in $\hat{N}$ are locally closed. The continuity of the action follows directly from Theorem~\ref{locally closed orbits}. By Glimm's Theorem and Remark~\ref{normal orbit}, we only have to show that $\hat{N}/R\cong \mathfrak{n}^*/\operatorname{Ad}^*(G)$ is almost Hausdorff. Using Glimm's theorem again, this follows if the $\operatorname{Ad}^*(G)$-orbits in 
$\mathfrak n^*$ are locally closed. 
 Since the co-adjoint action $\operatorname{Ad}^*$ of $G$ on $\mathfrak{n}^*$ is $k$-rational when $l<p$, this follows from Theorem~\ref{locally closed orbits}.
\end{proof}
In order to give some examples for which  the above theorem  applies, we use the following proposition, which is a slight extension of \cite[Proposition~4.9]{MR2010742}.
\begin{prop}\label{reductive}
Let $k$ be a non-archimedean local field and $G=\mathbf{G}(k)$ be the $k$-rational points of a reductive linear algebraic group $\mathbf G$. If $1\ra C\ra \tilde{G}\ra G\ra 1$ is a (not necessarily algebraic) topological group extension with a compact second countable normal subgroup $C$, then $\tilde{G}$ satisfies BC for the compact operators $\mathcal{K}$ on all separable Hilbert spaces and with respect to any actions.
\end{prop}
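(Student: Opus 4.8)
The plan is to strip off the compact kernel $C$ and thereby reduce the assertion to the Baum--Connes conjecture for the reductive group $G$ alone, which is supplied by Lafforgue's theorem. Fix a separable Hilbert space $H$ and an action $\alpha\colon\tilde G\to\Aut(\mathcal K(H))$; when $H$ is finite-dimensional the algebra $\mathcal K(H)=M_n(\C)$ is Morita equivalent to $\C$ and the claim reduces to BC for $\C$, so I may assume $H$ is infinite-dimensional. The group $\tilde G$ is second countable (being an extension of the second countable $G$ by the second countable compact $C$), so $(\mathcal K(H),\alpha)$ is a legitimate separable coefficient system.

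Since $C$ is compact it has the Haagerup property, so, exactly as in the proof of Corollary~\ref{orbit BC}, I would apply the coefficient version of \cite[Theorem~2.1]{MR2100669} to the normal subgroup $C\lhd\tilde G$: the group $\tilde G$ satisfies BC for $(\mathcal K(H),\alpha)$ if and only if the pair $(\tilde G,C)$ satisfies the twisted Baum--Connes conjecture for the decomposition crossed product $B_0:=\mathcal K(H)\rtimes_r C$. Because $C$ is compact, $B_0=\mathcal K(H)\rtimes C$ is a separable type~I C*-algebra (a $c_0$-direct sum of algebras of compact operators). By \cite{MR1277761} the twisted action of $(\tilde G,C)$ on $B_0$ is Morita equivalent to an ordinary action $\beta\colon G\to\Aut(B)$ on a separable C*-algebra $B$, and by the Morita invariance of the twisted Baum--Connes conjecture \cite{MR1857079} the twisted BC for $(\tilde G,C)$ with $B_0$ is equivalent to BC for $G$ with $B$. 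Thus the whole statement comes down to showing that the single reductive group $G=\mathbf G(k)$ satisfies BC for the coefficient algebra $B$.

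For the last step I would invoke Lafforgue's theorem. As already recorded above, $G$ is a closed subgroup of the exact group $\GL_n(k)$ and is therefore itself exact. Lafforgue's work on reductive groups over non-archimedean local fields produces a $\gamma$-element that equals the identity, and hence yields the Baum--Connes conjecture for $G$ with \emph{arbitrary} separable coefficients; in particular $G$ satisfies BC for $B$. Reading the equivalences of the previous paragraph backwards now gives BC for $(\mathcal K(H),\alpha)$, and as $H$ and $\alpha$ were arbitrary this is the proposition. The passage from $G$ to $\tilde G$ is precisely the ``slight extension'' of \cite[Proposition~4.9]{MR2010742}.

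The main obstacle is the very last input: one genuinely needs Lafforgue's conjecture for $G$ with \emph{general} coefficients, not merely for $\C$, because $B$ is an infinite-dimensional type~I algebra carrying a nontrivial $G$-action (already for $\tilde G=\Mp_{2n}(k)$ this $G$-action is the metaplectic/Weil twist and cannot be trivialised, so a reduction to BC for $\C$ on $G$ would be circular). The point to be checked carefully is therefore that the strong form of Lafforgue's theorem---$\gamma=1$, equivalently BC with all coefficients---is available for reductive groups over local function fields; this is where positive characteristic must be accommodated, and it is legitimate because Lafforgue's building-theoretic methods are insensitive to the characteristic of $k$. The remaining ingredients (the coefficient form of \cite[Theorem~2.1]{MR2100669}, the untwisting of \cite{MR1277761}, and Morita invariance \cite{MR1857079}) are routine, but one should confirm that they are applied with the nontrivial coefficient $\mathcal K(H)$ rather than with $\C$.
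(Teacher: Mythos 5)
There is a genuine gap, and it sits exactly where you flagged ``the main obstacle'': the strong form of Lafforgue's theorem you invoke does not exist. Lafforgue's results for reductive groups over non-archimedean local fields give BC for the trivial coefficient $\C$ only; they do \emph{not} give $\gamma=1$, nor BC with arbitrary (even arbitrary type~I) separable coefficients. Indeed $\gamma=1$ is impossible here: for $n\geq 2$ the groups $\Sp_{2n}(k)$, $\SL_3(k)$, etc.\ have Kazhdan's property (T), and for a non-compact group $\gamma=1$ in $KK^G(\C,\C)$ would force K-amenability, which property (T) excludes. This is precisely why Lafforgue had to develop Banach $KK$-theory: the $\gamma$-element acts as the identity only after passing to suitable Banach completions, which yields the conjecture for $\C$ but not for general C*-coefficients; BC with coefficients for higher-rank reductive $p$-adic groups remains a well-known open problem, in characteristic zero just as much as in positive characteristic. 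So after your (otherwise legitimate) reduction via \cite[Theorem~2.1]{MR2100669}, \cite{MR1277761} and \cite{MR1857079}, you are left needing BC for $G$ with coefficients in an infinite-dimensional type~I algebra $B$ carrying a nontrivial $G$-action, and no known theorem supplies this. Note also that your warm-up reduction fails for the same reason: an action of $\tilde G$ on $M_n(\C)$ is given by a projective representation and is equivariantly Morita equivalent to a \emph{twisted} action on $\C$ (equivalently, an action of a $\mathbb{T}$-central extension), not to the trivial action, so even the finite-dimensional case does not reduce to BC for $\C$ for $\tilde G$.

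The paper's proof goes in the opposite direction and never needs coefficients for $G$. By \cite[Proposition~2.7]{MR2010742}, a second countable group $\tilde G$ satisfies BC for $\mathcal K$ with respect to \emph{every} action if and only if every central extension $\bar G$ of $\tilde G$ by $\mathbb{T}$ satisfies BC for $\C$ (this is the precise bookkeeping of the projective-representation phenomenon above). One then checks that each such $\bar G$ is again a group to which Lafforgue's trivial-coefficient theorem applies: since $C$ and $\mathbb{T}$ are compact, the proper isometric action of $G$ on its affine Bruhat--Tits building $\mathcal{B}(G)$ inflates to a proper action of $\bar G$, and property (HC) passes from $G$ to $\bar G$ by \cite[Lemma~4.3]{MR2010742}; hence $\bar G$ satisfies BC for $\C$ by \cite[Corollary~0.0.3 and Proposition~4.1.2]{MR1914617}. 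If you want to repair your argument, you must replace its last step by a mechanism of this kind (trading the coefficient $\mathcal K$ for central extensions of the group) rather than by any coefficient form of Lafforgue's theorem.
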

\begin{proof}
It is well-known that $G$ acts continuously and properly isometrically on its affine Bruhat-Tits building $\mathcal{B}(G)$ (see e.g.\cite[Section~2]{MR546588}). Since $C$ is compact, 
it follows that the action of $G$ on $\mathcal{B}(G)$ inflates to a proper action of  $\tilde{G}$. It is shown in \cite[Section~4.3]{MR1914617} that $G$ has the property (HC) in the sense of \cite[Section~4.1]{MR1914617}. It follows then from \cite[Lemma~4.3]{MR2010742} that $\tilde{G}$ also has the property (HC).

In order to show that $\tilde{G}$ satisfies BC for $\mathcal{K}$ with respect to any actions, it suffices to show that every central extension $\bar{G}$ of $\tilde{G}$ by $\mathbb{T}$ satisfies BC for $\C$ by \cite[Proposition~2.7]{MR2010742}. By the same arguments as before, we conclude that $\bar{G}$ also acts continuously properly isometrically on $\mathcal{B}(G)$ and $\bar{G}$ has the property (HC). Hence, $\bar{G}$ satisfies BC for $\C$ by \cite[Corollary~0.0.3 and Proposition~4.1.2]{MR1914617}.
\end{proof}
%
Proposition~\ref{reductive} implies that the Baum-Connes conjecture holds for all Levi-decomposable linear algebraic groups with a trivial unipotent radical, which has nilpotence length zero. In the following corollary we deal with vector groups, which have nilpotence length one.

\begin{cor}\label{ex1}
Let $k$ be a non-archimedean local field of positive characteristic $p$ and let  $n\in \N$. Then the following groups satisfy BC for $\C$:
\begin{itemize}
\item[(1)] $k^n\rtimes \GL_n(k)$, where $\GL_n(k)$ acts on $k^n$ by matrix multiplications.
\item[(2)] $k\rtimes \GL_n(k)$, where $\GL_n(k)$ acts on $k$ by $g.v=\det(g)^p \cdot v$ for $g\in \GL_n(k)$ and $v\in k$.
\end{itemize}
\end{cor}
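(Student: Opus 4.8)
The plan is to apply Theorem~\ref{algebraic orbit method} to each of the two groups, so the whole task reduces to verifying its hypotheses: that the group is a semidirect product $N\rtimes R$ with $N=\exp(\mathfrak{n})$ for a nilpotent $k$-Lie algebra of nilpotence length $l<p$, that $R$ is the $k$-points of a reductive group, and crucially that for every $[\pi]\in\hat{N}$ the stabilizer $R_\pi$ satisfies BC for $\mathcal{K}(H_\pi)$. In both (1) and (2) the unipotent part is a \emph{vector group}, i.e. $N=k^n$ (resp.\ $N=k$) viewed additively, with abelian Lie algebra $\mathfrak{n}=k^n$ of nilpotence length $l=1<p$. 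Since $\mathfrak{n}$ is abelian, the co-adjoint action is the same as the linear dual action and $\hat{N}\cong\mathfrak{n}^*$ is just the Pontryagin dual of the vector group, on which $R=\GL_n(k)$ acts. The reductive Levi factor here is $\mathbf{R}=\GL_n$, whose unipotent radical is trivial.

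First I would fix the identification $\hat{N}\cong\hat{k^n}$ (resp.\ $\hat{k}$), and describe the induced action of $\GL_n(k)$ on the dual explicitly. For case (1) the action of $\GL_n(k)$ on $k^n$ by matrix multiplication dualizes to the contragredient action on $\hat{k^n}\cong k^n$; there are exactly two orbits, namely $\{0\}$ and its complement $k^n\smallsetminus\{0\}$. The stabilizer of the trivial character $[\pi]=0$ is all of $\GL_n(k)$, whose fibre is $\mathcal{K}(H_\pi)=\C$, and BC for $\C$ holds by Proposition~\ref{reductive}. The stabilizer of a nonzero character is (conjugate to) the affine group $k^{n-1}\rtimes(\GL_{n-1}(k)\times k^\times)$ sitting as the isotropy subgroup of a nonzero vector; more usefully, it is again the $k$-points of a linear algebraic group with a reductive Levi factor and a vector-group unipotent radical, so BC for $\mathcal{K}(H_\pi)$ for this stabilizer can be obtained by an inductive application of the same machinery (or directly via Proposition~\ref{reductive} after stripping off the unipotent part). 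For case (2) the action $g.v=\det(g)^p\cdot v$ on $\hat{k}\cong k$ again factors through $k^\times$ via $g\mapsto\det(g)^p$, giving the two orbits $\{0\}$ and $k\smallsetminus\{0\}$, with stabilizers $\GL_n(k)$ and $\ker(\det(\cdot)^p)$ respectively—both reductive-type groups covered by Proposition~\ref{reductive}.

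The key technical step is therefore to identify each stabilizer $R_\pi$ concretely and to confirm it satisfies BC for $\mathcal{K}(H_\pi)$. Since all the representations $\pi$ here are one-dimensional (as $N$ is abelian, $H_\pi=\C$), the coefficient algebra $\mathcal{K}(H_\pi)$ is simply $\C$, so it suffices to check that each $R_\pi$ satisfies BC for $\C$. For the trivial-character stabilizer this is immediate from Proposition~\ref{reductive} since $R_0=\GL_n(k)$ is reductive. For the nonzero-character stabilizers one must check that they too are of the form to which Proposition~\ref{reductive} (or its group-extension consequences) applies: in case (2) the stabilizer $\{g:\det(g)^p=1\}$ is a closed subgroup of $\GL_n(k)$ differing from $\SL_n(k)$-type groups only by a $p$-power twist on the determinant, hence still the $k$-points of a reductive group, and Proposition~\ref{reductive} applies directly.

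The main obstacle I expect is the nonzero-orbit stabilizer in case (1): its isotropy group is \emph{not} reductive—it contains the unipotent translations fixing a vector—so Proposition~\ref{reductive} does not apply to it verbatim. I would handle this either by an induction on $n$ (the stabilizer of a nonzero vector is, up to conjugacy and a reductive factor, a smaller group of the same type $k^{n-1}\rtimes\GL_{n-1}(k)$, letting Corollary~\ref{orbit for BC} be reapplied), or by decomposing the stabilizer as an extension of its reductive quotient by a vector group and invoking Proposition~\ref{reductive} together with the permanence properties for Haagerup-type extensions already used in Corollary~\ref{orbit BC}. The delicate point is to make the geometry of the orbit stratification explicit enough that each stratum's stabilizer is manifestly handled, and to confirm the smoothness hypothesis (local closedness of all orbits), which here follows at once from Theorem~\ref{locally closed orbits} since the $\GL_n$-action on the dual vector space is $k$-rational.
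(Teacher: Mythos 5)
Your strategy is exactly the paper's: reduce via Theorem~\ref{algebraic orbit method} to checking BC for the stabilizers of the two orbits $\{0\}$ and its complement in the dual, handle the reductive stabilizers by Proposition~\ref{reductive}, and handle the non-reductive stabilizer in~(1) by induction on $n$. However, three points need repair, none of which changes the architecture but all of which are stated incorrectly or left open in your write-up.

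First, in~(1) the isotropy group of a nonzero character is $k^{n-1}\rtimes\GL_{n-1}(k)$, \emph{not} $k^{n-1}\rtimes(\GL_{n-1}(k)\times k^\times)$; the extra $k^\times$ factor appears for the stabilizer of a line, not of a vector, and your own later formula inside the induction is the correct one, so the two statements contradict each other. (Also, your parenthetical alternative ``directly via Proposition~\ref{reductive} after stripping off the unipotent part'' is not a valid shortcut --- that proposition applies only to compact extensions of reductive groups --- so the induction is the route you actually need.) Second, your induction has no stated base case. The paper terminates it at $k\rtimes\GL_1(k)$, which is amenable, hence satisfies BC for $\C$ by Higson--Kasparov \cite{MR1821144}; alternatively, one more application of your own machinery at $n=1$ works, since the stabilizers there are $k^\times$ (a torus, covered by Proposition~\ref{reductive}) and the trivial group. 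Either way, it must be said. Third, and most substantively, in~(2) your claim that $\{g\in\GL_n(k) : \det(g)^p = 1\}$ is ``still the $k$-points of a reductive group'' is asserted rather than proved: scheme-theoretically the equation $\det^p=1$ cuts out a non-reduced (hence non-reductive) subgroup scheme, so something genuinely has to be checked. The point of the example, and the paper's one-line argument, is that in characteristic $p$ one has $\det(g)^p-1=(\det(g)-1)^p$, so the set of $k$-rational solutions is exactly $\SL_n(k)$, to which Proposition~\ref{reductive} applies. Without this Frobenius-injectivity observation your treatment of~(2) is incomplete.
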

\begin{proof}
Since $\GL_n(k)$ acts on $k^d$ by $k$-linear endomorphism for $d\in \{1,n\}$, it follows that $\hat{k^d}$ is $\GL_n(k)$-equivariantly topologically isomorphic to $k^d$ with the action of $\GL_n(k)$ given by $(g,v)\mapsto (g^{-1})^t\cdot v$ for $g\in \GL_n(k)$ and $v\in k^d$. By Theorem~\ref{algebraic orbit method} it suffices to show that the stabilizer $\GL_n(k)_v$ satisfies BC for $\C$ for all $v\in k^d$.

(1): Since $\GL_n(k)$ acts transitively on $k^n\backslash\{0\}$, there are only two orbits: $\{0\}$ and $k^n\backslash\{0\}$. Then the following are all stabilizers up to conjugacy: $\GL_n(k)_{\{0\}}=\GL_n(k)$ and $\GL_n(k)_{e_1}=k^{n-1}\rtimes \GL_{n-1}(k)$, where $e_1=(1,0,\ldots,0)^t\in k^n$. Since $\GL_n(k)$ is reductive, it satisfies BC for $\C$ by Proposition~\ref{reductive}. Moreover, $k^{n-1}\rtimes \GL_{n-1}(k)$ satisfies BC for $\C$ by an induction argument as long as the conjecture holds for $k\rtimes \GL_{1}(k)$. Since $k\rtimes \GL_{1}(k)$ is amenable, we complete the proof by \cite{MR1821144}.

(2): We have to show that $\GL_n(k)_v$ satisfies BC for $\C$ for every $v\in k$. It is clear that $\GL_n(k)_{\{0\}}=\GL_n(k)$. Let $v\in k\backslash \{0\}$. Since $k$ has characteristic $p$, it follows that
\begin{align*}
\GL_n(k)_v&=\{g\in \GL_n(k): \det(g)^p-1=0\}\\
          &=\{g\in \GL_n(k): (\det(g)-1)^p=0\}\\
          &=\{g\in \GL_n(k): \det(g)-1=0\}\\
          &=\SL_n(k).
\end{align*}
Since both $\GL_n(k)$ and $\SL_n(k)$ are reductive, we are done by Proposition~\ref{reductive}.
\end{proof}
\begin{rem}
By a proof almost identical to the proof of Corollary~\ref{ex1} (1) we see that $k^n\rtimes \SL_n(k)$ satisfies BC for $\C$ for a local (function) field $k$. This is already known from \cite[Example~4.3]{MR1836047}.
\end{rem}

We end this paper with some more advanced examples. One of these is the Jacobi group, which is the semidirect product of the symplectic group and the Heisenberg group. Its unipotent radical is the Heisenberg group, which has nilpotence length two.

Let $k$ be a non-archimedean local field. We will assume that $k$ is \emph{not} of characteristic 2. Recall that the \emph{symplectic group} $\Sp_{2n}(k)$ is the closed subgroup of $\GL_{2n}(k)$ consisting of all matrices $g$ with $g^tJg=J$, where 
\begin{align*}
J=\begin{pmatrix} 0 & I_n  \\ -I_n & 0 \end{pmatrix}
\end{align*}
and $I_n$ is the $n\times n$ identity matrix. Observe that $\Sp_2(k)=\SL_2(k)$. The symplectic group $\Sp_{2n}(k)$ has a unique perfect two-fold central extension $\Mp_{2n}(k)$ which is called the \emph{metaplectic group} (see \cite{MR0165033, MR777342}): 
$$1\ra \Z_2\ra \Mp_{2n}(k) \ra \Sp_{2n}(k) \ra 1.$$
However, $\Mp_{2n}(k)$ is not a linear group. Since exactness is stable under extensions (see \cite[Theorem~5.1]{MR1725812}), we see that $\Mp_{2n}(k)$ is an exact group. 
Moreover, $\Mp_{2n}(k)$ satisfies BC for the compact operators $\mathcal{K}$ with respect to any actions by Proposition~\ref{reductive}. Let $\Sp_{2n}(k)$ act on $k^{2n}$ by matrix multiplications and let $\Mp_{2n}(k)$ act on $k^{2n}$ through the action of $\Sp_{2n}(k)$.

Now consider the symplectic form $\omega$ on $k^{2n}$ given by $$\omega(x,y)=x^tJy,\quad \text{for $x,y\in k^{2n}$}.$$
The $(2n+1)$-dimensional \emph{Heisenberg group} over $k$ is the group $\HH_{2n+1}(k)$ with underlying set $k^{2n}\times k$ and group multiplication  given by
\begin{align*}
(x,\lambda)(y,\mu)=(x+y,\lambda+\mu+\frac{\omega(x,y)}{2}),\quad \text{for $x,y\in k^{2n}$ and $\lambda,\mu\in k$}.
\end{align*}
The symplectic group $\Sp_{2n}(k)$ acts on $\HH_{2n+1}(k)$ by automorphisms:
\begin{align*}
g\cdot (x,\lambda)=(g x,\lambda)\quad \text{for $g\in \Sp_{2n}(k)$, $x\in k^{2n}$ and $\lambda\in k$}. 
\end{align*}
Let $\Mp_{2n}(k)$ act on $\HH_{2n+1}(k)$ through the action of $\Sp_{2n}(k)$.
\begin{cor}\label{ex2}
Let $k$ be a non-archimedean local field of positive characteristic $p>2$ and $n\in \N$. Then the following groups satisfy BC for $\C$:
\begin{itemize}
\item[(1)] $\HH_{2n+1}(k)\rtimes \Sp_{2n}(k)$,
\item[(2)] $k^{2n}\rtimes \Sp_{2n}(k)$,
\item[(3)] $\HH_{2n+1}(k)\rtimes \Mp_{2n}(k)$,
\item[(4)] $k^{2n}\rtimes \Mp_{2n}(k)$.
\end{itemize}
\end{cor}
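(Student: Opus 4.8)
The plan is to deduce all four statements from the orbit method developed above, reducing each to a computation of point stabilizers, and to organize them as two inductions on $n$. Writing $(1_m),(2_m),(3_m),(4_m)$ for the four assertions with parameter $m$, I would prove $(1_m)$ and $(3_m)$ by induction on $m$ and then obtain $(2_m)$ and $(4_m)$ as consequences, because the stabilizers appearing in (2) (resp.\ (4)) turn out to be exactly the groups of (1) (resp.\ (3)) with $n$ replaced by $n-1$. For (1) and (2) the ambient group is linear algebraic with a Levi decomposition whose unipotent radical ($\HH_{2n+1}(k)$, resp.\ $k^{2n}$) is $\exp$ of a nilpotent $k$-Lie algebra of nilpotence length $l\le 2<p$, so Theorem~\ref{algebraic orbit method} applies directly. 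For (3) and (4) the metaplectic group is not linear algebraic, so I would instead use Corollary~\ref{orbit for BC}: $\Mp_{2n}(k)$ is exact (as recorded above), its action on $\widehat{N}$ factors through $\Sp_{2n}(k)$, and hence the orbits and their local closedness are inherited from the $k$-rational co-adjoint action of $\Sp_{2n}(k)$.

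The representation theory enters through the Stone--von Neumann description of $\widehat{\HH_{2n+1}(k)}$, which consists of the one-dimensional characters, factoring through $\HH_{2n+1}(k)/Z\cong k^{2n}$ and so parametrized by $\widehat{k^{2n}}\cong k^{2n}$, together with the infinite-dimensional representations $\pi_\chi$, one for each nontrivial central character $\chi$. Since $\Sp_{2n}(k)$ fixes the centre pointwise it fixes every class $[\pi_\chi]$, while on the character part it acts through its natural (symplectically self-dual) action on $k^{2n}$, with the two orbits $\{0\}$ and $k^{2n}\smallsetminus\{0\}$. I would then invoke two standard structural facts: $\Sp_{2n}(k)$ is transitive on $k^{2n}\smallsetminus\{0\}$, and the stabilizer of a nonzero vector is the Jacobi group $\HH_{2n-1}(k)\rtimes\Sp_{2n-2}(k)$ (consistent with $\dim\Sp_{2n}-2n=\dim\HH_{2n-1}+\dim\Sp_{2n-2}$).

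For the inductive step of $(1_n)$ I would verify the hypothesis of Theorem~\ref{algebraic orbit method}, that each stabilizer satisfies BC for $\mathcal{K}(H_\pi)$. At the trivial character and at every $\pi_\chi$ the stabilizer is all of $\Sp_{2n}(k)$, which is reductive, so Proposition~\ref{reductive} gives BC for $\mathcal{K}$ with respect to \emph{any} action --- covering both the one-dimensional fibre $\C$ at the trivial character and the action on $\mathcal{K}(H_{\pi_\chi})$ implemented (in the semidirect-product picture of Remark~\ref{rem-semidirect}) by the adjoint of the projective Weil representation. At a nonzero character the fibre is $\C$ with trivial action and the stabilizer is $\HH_{2n-1}(k)\rtimes\Sp_{2n-2}(k)$, which satisfies BC for $\C$ by the hypothesis $(1_{n-1})$. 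With the trivial base case $(1_0)$ (where $\HH_1(k)=k$ is abelian) this proves $(1_n)$ for all $n$. Statement $(2_n)$ is then immediate: the stabilizers for $k^{2n}\rtimes\Sp_{2n}(k)$ are $\Sp_{2n}(k)$ at $0$ and $\HH_{2n-1}(k)\rtimes\Sp_{2n-2}(k)$ at a nonzero vector, and these satisfy BC by Proposition~\ref{reductive} and $(1_{n-1})$ respectively.

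The metaplectic statements $(3_n)$ and $(4_n)$ follow the same pattern with $\Mp_{2n}(k)$ throughout; Proposition~\ref{reductive} still applies since $\Mp_{2n}(k)$ is a compact ($\Z_2$-) central extension of the reductive group $\Sp_{2n}(k)$, and at an infinite-dimensional $\pi_\chi$ the relevant unitaries now assemble into a genuine representation of $\Mp_{2n}(k)$. The one genuinely new input --- and the step I expect to be the main obstacle --- is to identify the stabilizer in $\Mp_{2n}(k)$ of a nonzero character, namely the preimage of $\HH_{2n-1}(k)\rtimes\Sp_{2n-2}(k)$ under $\Mp_{2n}(k)\to\Sp_{2n}(k)$, as the metaplectic Jacobi group $\HH_{2n-1}(k)\rtimes\Mp_{2n-2}(k)$; this amounts to showing that the metaplectic $\Z_2$-cocycle restricts to the metaplectic cocycle on the $\Sp_{2n-2}(k)$-factor while splitting over the unipotent factor $\HH_{2n-1}(k)$. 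Granting this identification, that stabilizer satisfies BC for $\C$ by $(3_{n-1})$, the induction closes from the base case $(3_0)$ (where $\Mp_0(k)\cong\Z_2$, so $\HH_1(k)\rtimes\Mp_0(k)$ is a compact extension of $k$ and hence amenable), and $(4_n)$ follows from $(3_{n-1})$ exactly as $(2_n)$ followed from $(1_{n-1})$.
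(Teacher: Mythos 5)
Your proposal is correct and follows the same overall route as the paper: Theorem~\ref{algebraic orbit method} for (1) and (2), Corollary~\ref{orbit for BC} for the non-algebraic metaplectic cases (3) and (4), the Stone--von Neumann description of $\hat{\HH_{2n+1}(k)}$, transitivity of $\Sp_{2n}(k)$ on $k^{2n}\smallsetminus\{0\}$ with Jacobi-group stabilizers, Proposition~\ref{reductive} for the full-group stabilizers at the infinite-dimensional representations and at $0$, and an induction on $n$ in which (2) and (4) reduce to the stabilizers already treated in (1) and (3). The one genuine difference is how the induction is anchored. The paper starts it at $n=1$: it proves BC for $\HH_3(k)\rtimes\Sp_2(k)$ and $k^2\rtimes\Sp_2(k)$ by combining the Haagerup property of $\SL_2(k)$ with the extension theorem \cite[Theorem~2.1]{MR2100669} and Higson--Kasparov \cite{MR1821144}, and for $\HH_3(k)\rtimes\Mp_2(k)$ it additionally invokes the Haagerup property of $\Mp_2(k)$ from \cite[Proposition~2.5 (2)]{MR1756981}. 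You instead observe that the stabilizer formula degenerates correctly at $n=1$ (the stabilizer of $e_1$ in $\Sp_2(k)=\SL_2(k)$ is the unipotent group $k\cong\HH_1(k)\rtimes\Sp_0(k)$, and its preimage in $\Mp_2(k)$ is a compact extension of $k$, hence amenable), so the $n=1$ case becomes just another instance of the inductive step and the induction closes at the trivial case $n=0$; this eliminates the Haagerup-based input entirely and makes the argument more uniform, at the cost only of the degenerate conventions $\HH_1(k)=k$, $\Sp_0(k)=\{1\}$. Finally, note that the identification $\Mp_{2n}(k)_{e_1}=\HH_{2n-1}(k)\rtimes\Mp_{2n-2}(k)$, which you rightly single out as the main nontrivial input for (3) and (4) (splitting of the metaplectic cocycle over the unipotent factor, metaplectic restriction on the $\Sp_{2n-2}$-factor), is asserted in the paper without proof as well, so on this point you are on exactly the same footing as the published argument.
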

\begin{proof}
(1): It follows from Theorem~\ref{algebraic orbit method} that it suffices to show that $\Sp_{2n}(k)_\pi$ satisfies BC for $\mathcal{K}(H_\pi)$ for all $[\pi]\in \hat{\HH_{2n+1}(k)}$. It is well-known from the Stone-von Neumann theorem that $\hat{\HH_{2n+1}(k)}=\{[\chi_{v,w}]\}_{v,w\in k^n}\cup \{[\pi_\lambda]\}_{\lambda\in k\backslash \{0\}}$, where $\chi_{v,w}$ is a unitary character and $\pi_\lambda$ is an infinite-dimensional irreducible unitary representation of $\HH_{2n+1}(k)$ on $L^2(k^n)$. If $g\in \Sp_{2n}(k)$, then the action is given by
\begin{align*}
&g\cdot \chi_{v,w}=\chi_{(g^{-1})^t\cdot (v,w)^t}\quad \text{for all $v,w\in k^n$} &\text{and}& &g\cdot \pi_\lambda\cong \pi_\lambda \quad \text{for all $\lambda\in k\backslash \{0\}$}.
\end{align*}
We refer to \cite[Section~1]{MR777342} and \cite[Theorem~5.2 and Corollary~5.3]{MR1486137} for more details about the Stone-von Neumann theorem.

Since $\Sp_{2n}(k)_{\pi_\lambda}=\Sp_{2n}(k)$ is reductive, it satisfies BC for $\mathcal{K}(H_{\pi_\lambda})$ by Proposition~\ref{reductive}. Since $\{[\chi_{v,w}]\}_{v,w\in k^n}\cong \hat{k^{2n}}$ is  $\Sp_{2n}(k)$-equivariantly topologically isomorphic to $k^{2n}$, it is equivalent to show that $\Sp_{2n}(k)_{{(v,w)}^t}$ satisfies BC for $\C$ for all $v,w\in k^n$.

Since $\Sp_{2n}(k)$ is stable under transpositions, it acts transitively on $k^{2n}\backslash\{0\}$. Hence, the following are all stabilizers up to conjugacy: $\Sp_{2n}(k)_{\{0\}}=\Sp_{2n}(k)$ and $\Sp_{2n}(k)_{e_1}=\HH_{2(n-1)+1}(k)\rtimes \Sp_{2(n-1)}(k)$ for $n\geq 2$, where $e_1=(1,0,\ldots,0)^t\in k^{2n}$. Since $\Sp_{2n}(k)$ is reductive, it satisfies BC for $\C$ by Proposition~\ref{reductive}. Moreover, $\HH_{2(n-1)+1}(k)\rtimes \Sp_{2(n-1)}(k)$ satisfies BC for $\C$ by an induction argument as long as the conjecture holds for $\HH_{3}(k)\rtimes \Sp_{2}(k)$. Since $\Sp_{2}(k)=\SL_2(k)$ has the Haagerup property, it follows from \cite[Theorem~2.1]{MR2100669} and \cite{MR1821144} that $\HH_{3}(k)\rtimes \Sp_{2}(k)$ satisfies BC for $\C$. Therefore, we complete the proof.

(2): Since $\Sp_{2}(k)=\SL_2(k)$ has the Haagerup property, it follows from \cite[Theorem~2.1]{MR2100669} and \cite{MR1821144} that $k^2\rtimes \Sp_{2}(k)$ satisfies BC for $\C$. We may assume that $n\geq 2$.  It follows from Theorem~\ref{algebraic orbit method} that it suffices to show that $\Sp_{2n}(k)_v$ satisfies BC for $\C$ for all $v\in k^{2n}$ with the action of $\Sp_{2n}(k)$ given by $(g,v)\mapsto (g^{-1})^t\cdot v$ for $g\in \Sp_{2n}(k)$ and $v\in k^{2n}$. Since $\Sp_{2n}(k)$ is stable under transpositions, it acts transitively on $k^{2n}\backslash\{0\}$. Hence, the following are all stabilizers up to conjugacy: $\Sp_{2n}(k)_{\{0\}}=\Sp_{2n}(k)$ and $\Sp_{2n}(k)_{e_1}=\HH_{2(n-1)+1}(k)\rtimes \Sp_{2(n-1)}(k)$ for $n\geq 2$. All of them satisfy BC for $\C$ as we have already shown in (1). Therefore, we complete the proof of (2).

Since $\Mp_{2n}(k)$ is not a linear group, we have to apply Corollary~\ref{orbit for BC} for (3) and (4). The metaplectic group $\Mp_{2n}(k)$ is a locally compact second countable exact group. Moreover, it acts continuously on $\HH_{2n+1}(k)$ and $k^{2n}$ such that all $\Mp_{2n}(k)$-orbits in $\hat{\HH_{2n+1}(k)}$ and $\hat{k^{2n}}$ are locally closed as it acts through the action of $\Sp_{2n}(k)$.

The proofs for (3) and (4) are almost identical to the proofs for (1) and (2) as long as we notice the following facts. $\Mp_{2n}(k)_{\pi_\lambda}=\Mp_{2n}(k)$ satisfies BC for the compact operators $\mathcal{K}$ with respect to any actions and $\Mp_{2n}(k)_{e_1}=\HH_{2(n-1)+1}(k)\rtimes \Mp_{2(n-1)}(k)$ for $n\geq 2$. Finally, we notice that $\Mp_{2}(k)$ has the Haagerup property by \cite[Proposition~2.5 (2)]{MR1756981}. We leave the details to the reader.
\end{proof}

\begin{rem} We hope that the methods described in this paper will eventually help to find a proof of the Baum-Connes conjecture
with trivial coefficient for all (groups of $k$-rational points of) linear algebraic groups over a local field $k$ with positive characteristic. 
The case of local fields with zero characteristic has been solved in \cite{MR2010742} using similar methods.
\end{rem}

\bibliographystyle{plain}
\bibliography{kangbib}
\end{document}